\makeatletter \theoremstyle{plain}
\newtheorem{thm}{Theorem}
\newtheorem*{thm*}{Theorem}
\newtheorem{lem}{Lemma}[section]
\newtheorem{thm2}[lem]{Theorem}
\newtheorem{cor}[thm]{Corollary}
\newtheorem{prop}[lem]{Proposition}
\theoremstyle{remark}
\newtheorem{rmk}[lem]{Remark}
\theoremstyle{definition}
 \newcommand{\N}{\mathbb{N}} 
 \newcommand{\R}{\mathbb{R}}
 \newcommand*\colvec[3][]{\begin{pmatrix}\ifx\relax#1\relax\else#1\\\fi#2\\#3\end{pmatrix}}
 \newcommand{\Hhh}{\mathscr{H}}
 \newcommand{\diam}{\mathrm{diam}}
 \newcommand{\dist}{\mathrm{dist}}
 \newcommand{\dm}{\mathrm{d}}
  \newcommand{\0}{\mathbf{0}}
  \newcommand{\woz}{\backslash\{0\}}
  \newcommand{\wo}{\backslash}
\newcommand\ndot{{\mkern 2mu\cdot\mkern 2mu}} 
 \newcommand{\lm}{\varnothing}
  \newcommand{\g}{\mathscr{G}} 
  \newcommand{\eucl}{\text{\scalebox{0.8}{\,$\mathbb{E}$}}}  
 \newcommand{\Norm}{\|\ndot\|} 
\newcommand{\tnorm}{{{}^{\text{\scalebox{0.8}{$\|\ndot\|$}}}}} 
 \newcommand{\hnorm}{\text{\scalebox{0.7}{$\|\ndot\|$}}} 
 \newcommand{\henorm}{\text{\scalebox{0.7}{$|\ndot |$}}}
  \newcommand{\eNorm}{|\ndot |}
   \def\Ker{\operatorname{Ker}}
     \def\diam{\operatorname{diam}}
      \newcommand{\aprec}{\text{\scalebox{0.5}[1.4]{$\prec$}}\,}
  \newcommand{\asucc}{\,\text{\scalebox{0.5}[1.4]{$\succ$}}}
  \newcommand{\ascal}{\text{\scalebox{0.5}[1.4]{$\prec$}}\,\ndot,\ndot\,\text{\scalebox{0.5}[1.4]{$\succ$}}}
\begin{document}

\title{Marstrand-type projection theorems for linear projections and in normed spaces}
\author{Annina Iseli}

\address {Mathematisches Institut,
Universit\"at Bern,
Sidlerstrasse 5,
CH-3012 Bern,
Switzerland}

\email{annina.iseli@math.unibe.ch}

\keywords{Hausdorff dimension, projections, 
{\it 2010 Mathematics Subject Classification: 28A78} }

\thanks{ This research was supported by the Swiss National Science Foundation Grant Nr. 200020 165507 .}
\begin{abstract}
We establish Marstrand-type as well as Besicovich-Federer-type projection theorems for closest-point projections onto hyperplanes in the normed space $\R^{n}$.
In particular, we prove that if a norm on $\R^{n}$ is $C^{1,1}$-regular, then the analogues of the well-known statements from the Euclidean setting hold. On the other hand, we construct an example of a $C^{1}$-regular norm in $\R^2$ for which Marstrand-type theorems fail. These results are obtained by comparison arguments. \end{abstract}
\maketitle

\section{Introduction}

This paper is concerned with the behavior of Hausdorff measure and Hausdorff dimension under projections along linear foliations of $\R^n$ and in finite dimensional normed spaces. 
 Let $A\in \R^2$ be a Borel set. By $\dim A$ denote its Hausdorff dimension and by $\Hhh^s$ its Hausdorff $s$-measure where $s>0$. For every angle $\theta\in [0,\pi)$ consider the orthogonal projection $P^\eucl_\theta:\R^2\to L_\theta$ of $\R^2$ onto the line $L_\theta=\{r(\cos\theta, \sin \theta):r\in \R\}\subset \R^2$. From the facts that $L_\theta$ is a set of dimension $1$ and that the projection $P^\eucl_\theta$ is a {$1$-Lipschitz} mapping, one easily deduces that $\dim P^\eucl_\theta A\leq \min\{1,\dim A\}$ for all $\theta\in [0,\pi)$.
In 1954, Marstrand~\cite{Marstrand1954} proved that given a Borel set $A\in \R^2$, the orthogonal projection of $A$ onto the line $L_\theta$ is a set of Hausdorff dimension $$\dim P^\eucl_{L_\theta}A=\min\{1,\dim A\}$$ for $\Hhh^1$-a.e $\theta\in [0,\pi)$, i.e., given a Borel set $A$, there exists an $\Hhh^1$-zero set $E\subset [0,\pi)$ such that $\dim P^\eucl_\theta A= \min\{1,\dim \}$ for all angles $\theta\in [0,\pi)\wo E$. This theorem marked the start of a long sequence of results in the same spirit. They are known as Marstrand-type projection theorems and we summarize some of them in Theorem~\ref{thm_euclidean} below. 
In order to formally make sense of Theorem~\ref{thm_euclidean} recall the following definitions. For positive integers $m<n$ we denote by $G(n,m)$  the Grass-mannian manifold, i.e. the family of $m$-dimensional linear subspaces ($m$-planes) of $\R^n$. 
For every $m$-plane $V\in G(n,m)$, let $P^\eucl_V:\R^n\to V$ be the orthogonal projection of $\R^n$ onto $V$.  We will refer to the set $\{P^\eucl_V:V\in G(n,m)\}$ as the family of orthogonal projections (onto $m$-planes). Notice that the Grassmannian $G(n,m)$ is equipped with a natural measure $\sigma_{n,m}$ which is induced by the action of~$O(n)$ on $G(n,m)$ and the invariant Haar measure on~$O(n)$; 
see~\cite[Chapter\,3]{Mattila1995}. Moreover, since~$G(n,m)$ carries a (smooth) manifold structure the notion of Hausdorff dimension of subsets of $G(n,n-1)$ is well-defined.\\[0.3cm]
The following Theorem is a summary of results due to Marstrand~\cite{Marstrand1954}, Kaufman~\cite{Kaufman1968}, Mattila~\cite{Mattila1975}, Falconer~\cite{Falconer1982}, and Peres-Schlag~\cite{PS2000}.

\begin{thm2}\label{thm_euclidean}
For each $m$-plane $V\in G(n,m)$ denote by $P^\eucl_V:\R^n\to V$ the orthogonal projection of~$\R^n$ onto $V$. Then, for all Borel sets $A\subseteq \R^n$, the following hold:
 \begin{enumerate}[label={\arabic*.}, topsep=5pt, itemsep=5pt, leftmargin=30pt]
\item If $\dim A \leq m$, then 
\begin{enumerate}[label={\alph*)}, topsep=3pt, itemsep=3pt]
\item  $\dim P^\eucl_V A= \dim A$ for $\sigma_{n,m}$-a.e.\ $V\in G(n,m)$,
\item For $0<\alpha\leq\dim A$, $\dim\,\{V\in G(n,m): \dim(P^\eucl_V A)<\alpha\}\leq (n-m-1)m+\alpha$.
\end{enumerate}
\item If $\dim A > m$, then 
\begin{enumerate}[label={\alph*)}, topsep=3pt, itemsep=3pt]
\item  $\Hhh^m P^\eucl_V A>0$ for $\sigma_{n,m}$-a.e.\ $V\in G(n,m)$,
\item $ \dim\,\{V\in G(n,m): \mathscr{\Hhh}^m(P^\eucl_V A)=0\}\leq (n-m)m+m-\dim A$.
\end{enumerate} 
\item If $\dim A>2m$, then
\begin{enumerate}[label={\alph*)}, topsep=3pt, itemsep=3pt] 
\item  $P^\eucl_V A\subset \R^m$ has non-empty interior for $\sigma_{n,m}$-a.e.\ $V\in G(n,m)$,
\item $\dim\, \{V\in G(n,m):  \text{the interior of } P^\eucl_V A \text{ is empty }  \} \leq (n-m)m-\dim A +2m$
\end{enumerate}
\end{enumerate}
\end{thm2}

Many of the above statements are proven to be sharp; see e.g.~\cite{KaufMat1975, Falconer1982, Falconer1885_book}. Similar problems have been studied in various settings such as the Heisenberg groups~\cite{BFMT2012, BDCFMT2013, Hovila2014} and Riemannian surfaces~\cite{BaloghIseli2018_2, BaloghIseli2016, Brazilian2016}. Moreover, for an overview on the numerous works on the topic of projection theorems, we recommend the textbooks~\cite{Mattila1995, Falconer_Book, Mattila2015} as well as the survey articles~\cite{Mattila2004, Mattila_Arx2017}.\\[0.3cm] 
Another important projection theorem with a rather different flavor relates the size of sets under projections to their rectifiability properties. Recall that a subset $A$ of a metric space $(X,d)$ is called $m$-rectifiable if there exist a collection of at most countably many Lipschitz mappings $f_i:\R^m\to X$ such that 
$\Hhh^m\Big(A\, \wo \bigcup_i f_i(\R^m)\Big)=0.$ On the other hand, a set $E\subseteq \R^n$ is called purely $m$-unrectifiable, if $\Hhh^m(E\cap A)=0$ for every $m$-rectifiable set $A\subseteq\R^n$. The following theorem is due to Besicovitch~\cite{Besic1939} and Federer~\cite{Federer1947}; see also~\cite{Mattila1995}

\begin{thm2}\label{thm_besfed}
An $\Hhh^m$-measurable set $A\subseteq \R^n$ with $\Hhh^m(A)<\infty$ is purely $m$-unrectifiable if and only if $\Hhh^m(P_V^\eucl(A))=0$ for $\sigma_{n,m}$-a.e.\ $V\in G(n,m)$. \end{thm2}

Theorem~\ref{thm_besfed} has been generalized to other settings such as families of transversal projections in metric spaces~\cite{HJJL2012} and families of projections in the Heisenberg group~\cite{Hovila2014}.\\[0.3cm]
In this paper, we establish versions of the above Theorems for families of linear and surjective projections and families of closest-point projections with respect to sufficiently regular norms on $\R^n$. These results improve parts of the results in~\cite{BaloghIseli2018} jointly achieved with Balogh.\\[0.3cm]
We call a family of mappings $\{P_V: V\in G(n,m)\}$ a family of linear and surjective  projections (onto $m$-planes), if for every $V\in G(n,m)$, $P_V:\R^n\to V$ is a linear and surjective mapping. Notice that the family of orthogonal projections $\{P^\eucl_V:V\in G(n,m)\}$ is a family of linear and surjective projections. Moreover, every linear and surjective projection $P_V:\R^n\to V$ is a Lipschitz mapping. Hence it is a natural question whether Marstrand-type projection theorems generalize to families of linear and surjective projections.\\[0.3cm]
Many families of linear and surjective projections $\{P_V:V\in (n,m)\}$ are given in terms of linear foliations. Namely, if for $V\in G(n,m)$, we have $P_V(P_Vx)=P_Vx$ for all $x\in \R^n$, then there exists an $(n-m)$-plane $W\in G(n,n-m)$ with $V\cap W=\{0\}$  such that for all $x\in \R^n$, $P_Vx=a$, where $x=a+w$, $a\in V$ and $w\in W$. The affine $m$-planes $a+W$ with $a\in V$ are fibers of the foliation of $\R^n$ induced by $V$ and $W$, and it follows that  $\ker P_V=W$. 
It is straightforward to see that there exist families of linear and surjective projections for which Marstrand-type theorems must fail. Namely, consider $V_0\in G(n,m)$ and $W_0\in G(n,n-1)$ with $V_0\cap W_0=\{0\}$. Let $\mathcal{U}$ a small open neighbourhood of $V_0$ such that $V\cap W_0=\{0\}$ for all $V\in \mathcal{U}$. Now, for each $V\in \mathcal{U}$ define $P_V$ to be the projection onto~$V$ along the fibers $a+W_0$, i.e., $P_Vx=a$ where $x=a+w$, $a\in V$ and $w\in W_0$. Then, whenever the measure or dimension of a Borel set $A$ is decreased under $P_{V_0}$, then the same is true for all~$V\in\mathcal{U}$. Hence, Marstrand-type results must fail.\\
On the other hand, we shall prove that given a family $\{P_V:V\in G(n,m)\}$ of linear and surjective projections, if for every $V_0\in G(n,m)$ we can control the size of the set of $m$-planes $V\in G(n,m)$ for which $P_V$ and $P_{V_0}$ are projections along the same foliation, then Marstrand-type as well as Besicovitch-Federer-type theorems hold for this family. Define the mapping $\g:G(n,m) \to G(n,m)$ associated with the family $\{P_V:V\in G(n,m)\}$ by 
\begin{equation}\label{def_g(V)}
\g(V)=(\ker P_V)^\perp.
\end{equation}
This notation allows us to state the following analog of classical Marstrand-type projection theorems for families of linear and surjective projections.

\begin{thm}\label{thm_lin_proj}
Let $\{P_V:V\in G(n,m)\}$  be a family of linear and surjective projections whose associate mapping $\g$ is dimension non-decreasing and maps $\sigma_{n,m}$-positive sets to $\sigma_{n,m}$-positive sets. Then, the following hold for all Borel sets $A\subseteq \R^n$.
\begin{enumerate}[label={\arabic*.}, topsep=5pt, itemsep=5pt, leftmargin=30pt]
\item If $\dim A \leq m$, then 
\begin{enumerate}[label={\alph*)}, topsep=3pt, itemsep=3pt]
\item  $\dim P_V A= \dim A$ for $\sigma_{n,m}$-a.e.\ $V\in G(n,m) $,
\item For $0<\alpha\leq\dim A$, \\ $\dim\,\{V\in G(n,m) :  \dim(P_V A)<\alpha\}\leq (n-m-1)m+\alpha$.
\end{enumerate}
\item If $\dim A > m$, then 
\begin{enumerate}[label={\alph*)}, topsep=3pt, itemsep=3pt]
\item  $\Hhh^m (P_V A)>0$ for $\sigma_{n,m}$-a.e.\ $V\in G(n,m) $,
\item $\dim\,\{V\in G(n,m) : \mathscr{H}^m(P_V A)=0\}\leq (n-m)m+m-\dim A$.
\end{enumerate}
\item If $\dim A>2m$, then
\begin{enumerate}[label={\alph*)}, topsep=3pt, itemsep=3pt] 
\item  $P_V A\subseteq V\simeq \R^m$ has non-empty interior for $\sigma_{n,m}$-a.e.\ $V\in G(n,m)$,
\item  $\dim\,\{V\in G(n,m):  (P_VA)^\circ \neq \lm \}\leq (n-m)m+2m -\dim A.$
\end{enumerate}
\end{enumerate} 
\end{thm}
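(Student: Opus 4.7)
The plan is to reduce Theorem~\ref{thm_lin_proj} to the corresponding assertions of Theorem~\ref{thm_euclidean} via a linear comparison between each $P_V$ and the orthogonal projection $P^\eucl_{\g(V)}$. The key observation is that $P_V$ and $P^\eucl_{\g(V)}$ share the same kernel: if $W:=\ker P_V$, then $\g(V)=W^\perp$, hence $\ker P^\eucl_{\g(V)}=(W^\perp)^\perp=W$. As both are surjective linear maps from $\R^n$ onto $m$-dimensional subspaces with common kernel, they factor through the same quotient $\R^n/W$ and therefore differ only by a linear isomorphism on the target side.

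First I would establish the following comparison lemma: for every $V\in G(n,m)$ there exists a linear bijection $L_V\colon V\to \g(V)$ with $P^\eucl_{\g(V)}=L_V\circ P_V$. As a linear bijection between two $m$-dimensional subspaces of $\R^n$ (each inheriting the Euclidean norm), $L_V$ is bi-Lipschitz. Hence for every Borel set $A\subseteq\R^n$, the sets $P_VA$ and $P^\eucl_{\g(V)}A$ have the same Hausdorff dimension; moreover $\Hhh^m(P_VA)=0$ if and only if $\Hhh^m(P^\eucl_{\g(V)}A)=0$, and $P_VA$ has non-empty interior in $V$ if and only if $P^\eucl_{\g(V)}A$ has non-empty interior in $\g(V)$.

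Granting this, each item of Theorem~\ref{thm_lin_proj} reduces cleanly to the corresponding item of Theorem~\ref{thm_euclidean}. Fixing a Borel set $A$ and denoting by $E\subseteq G(n,m)$ the exceptional set produced by Theorem~\ref{thm_euclidean} for the orthogonal family, the comparison lemma identifies the exceptional set for $\{P_V\}$ as $\g^{-1}(E)$. For the almost-everywhere assertions (1a, 2a, 3a), Theorem~\ref{thm_euclidean} yields $\sigma_{n,m}(E)=0$; if $\sigma_{n,m}(\g^{-1}(E))$ were positive, then by the hypothesis on $\g$ its image $\g(\g^{-1}(E))\subseteq E$ would have positive $\sigma_{n,m}$-measure, contradicting $\sigma_{n,m}(E)=0$. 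For the dimension bounds (1b, 2b, 3b), the chain
\[
\dim \g^{-1}(E)\leq \dim \g(\g^{-1}(E))\leq \dim E
\]
transfers the Euclidean bound to the new family: the first inequality is the hypothesis that $\g$ is dimension non-decreasing, and the second uses $\g(\g^{-1}(E))\subseteq E$.

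The main obstacle is really only the comparison lemma itself, namely verifying that the standard factorization through $\R^n/W$ yields a linear bijection that simultaneously respects Hausdorff measure, Hausdorff dimension, and the property of having non-empty interior. Once this lemma is in place, the remainder of the proof is essentially formal and invokes each of the two standing assumptions on $\g$ in the single natural way indicated above.
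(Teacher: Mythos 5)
Your proposal is correct and follows essentially the same route as the paper: the paper's key Lemma~\ref{lem_linear_1} is exactly your comparison lemma (a linear bijection between the images of two surjective linear maps with common kernel, obtained there via a choice of basis rather than the quotient $\R^n/W$), and the reduction to Theorem~\ref{thm_euclidean} via the identification of the exceptional set with $\g^{-1}(E)$, using the two hypotheses on $\g$ exactly as you indicate, is the paper's argument verbatim. The only cosmetic difference is that you spell out the preservation of non-empty interior for part~3, which the paper leaves implicit under ``the proofs of 2 and 3 are analogous.''
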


By the same methods we also obtain a Besicovitch-Federer-type projection theorem.

\begin{thm}\label{thm_besfed_lin}
Let $\{P_V:V\in G(n,m)\}$  be a family of linear and surjective projections such that for all $E\subset G(n,m)$, $\sigma_{n,m}(\g^{-1}(E))=0$ if and only if $\sigma_{n,m}(E)=0$. Then, an $\Hhh^m$-measurable set $A\subseteq \R^n$ with $\Hhh^m(A)<\infty$ is purely $m$-unrectifiable if and only if $\Hhh^m(P_V(A))=0$ for $\sigma_{n,m}$-a.e.\ $V\in G(n,m)$. 
\end{thm}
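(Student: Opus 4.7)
The plan is to reduce Theorem~\ref{thm_besfed_lin} to the classical Besicovitch--Federer Theorem~\ref{thm_besfed} by comparing, for each $V\in G(n,m)$, the projection $P_V$ to the orthogonal projection $P^\eucl_{\g(V)}$ onto the $m$-plane $\g(V)=(\ker P_V)^\perp$.

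The key observation is linear-algebraic: $P_V$ and $P^\eucl_{\g(V)}$ have the common kernel $(\g(V))^\perp$. Writing $\R^n=\g(V)\oplus(\g(V))^\perp$, any $x=y+w$ with $y\in\g(V)$ and $w\in(\g(V))^\perp$ satisfies $P^\eucl_{\g(V)}(x)=y$ while $P_V(x)=P_V(y)$. Hence $P_V=T_V\circ P^\eucl_{\g(V)}$, where $T_V:=P_V|_{\g(V)}\colon\g(V)\to V$ is a linear isomorphism between $m$-dimensional subspaces and therefore bi-Lipschitz. In particular, for any Borel set $A\subseteq\R^n$, $\Hhh^m(P_V A)=0$ if and only if $\Hhh^m(P^\eucl_{\g(V)} A)=0$.

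Set $E:=\{W\in G(n,m):\Hhh^m(P^\eucl_W A)>0\}$, so that $\{V\in G(n,m):\Hhh^m(P_V A)>0\}=\g^{-1}(E)$. The hypothesis on $\g$ gives that $\sigma_{n,m}(\g^{-1}(E))=0$ if and only if $\sigma_{n,m}(E)=0$, and Theorem~\ref{thm_besfed} applied to $A$ gives that $\sigma_{n,m}(E)=0$ if and only if $A$ is purely $m$-unrectifiable. Chaining these equivalences yields exactly the statement of Theorem~\ref{thm_besfed_lin}.

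The only subtlety I anticipate concerns measurability: one must check that $E$ (and hence $\g^{-1}(E)$) is $\sigma_{n,m}$-measurable, so that the hypothesis on $\g$ applies to it and the classical theorem can be invoked. This follows from the standard fact --- already used in the proof of Theorem~\ref{thm_besfed} --- that for fixed Borel $A$, the map $W\mapsto\Hhh^m(P^\eucl_W A)$ on $G(n,m)$ is Borel measurable. Modulo this measurability point, the proof reduces to the chain of equivalences above, with no substantial obstacle.
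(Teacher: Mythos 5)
Your proof is correct and follows essentially the same route as the paper: compare $P_V$ with the orthogonal projection $P^\eucl_{\g(V)}$ (which shares its kernel), observe that the exceptional set for $\{P_V\}$ is the $\g$-preimage of the exceptional set for orthogonal projections, and invoke Theorem~\ref{thm_besfed} together with the hypothesis on $\g$. Your factorization $P_V=T_V\circ P^\eucl_{\g(V)}$ is exactly the content of the paper's Lemma~\ref{lem_linear}, and you correctly compare against $P^\eucl_{\g(V)}$ where the paper's written proof misprints $P^\eucl_V$.
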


Although Theorems~\ref{thm_lin_proj} and~\ref{thm_besfed_lin} are interesting in their own right (see Section~\ref{sec_not_norm}) we are particularly interested in applying them to the setting of normed spaces. Let $\Norm$ be a strictly convex norm on $\R^n$, i.e., a norm whose unit sphere 
$S_\tnorm^{n-1}=\{x\in \R^n:\|x\|=1\}$ is the boundary of a strictly convex set. Then for every $x\in \R^n$ and every $m$-plane $V\in G(n,m)$, there exists a unique point $q\in V$ that realizes the distance between $x$ and $V$ with respect to $\Norm$, i.e., $\|x-q\|=\dist_\tnorm(x,V)\coloneqq\inf\{\|x-y\|:y\in V\}$.
We call the mapping $P^\hnorm_V:\R^n\to V$ given by $P_Vx=q$, where $\|x-q\|=\dist_\tnorm(x,V)$, the closest-point projection with respect to $\Norm$ onto $V$. Obviously, in case that $\Norm$ is the standard Euclidean norm $|\ndot |$ on $\R^n$ we have
$P^\eucl_V=P^\henorm_V$, for all $V\in G(n,m)$. We denote the unit sphere with respect to $|\ndot |$ in $\R^n$ by $S^{n-1}$. \\[0.3cm]
If $m=n-1$ and $\Norm$ is a strictly convex norm on $\R^n$, then one can check that $\{P_V^\hnorm:V\in G(n,m)\}$ is a family of linear and surjective projections (see Section~\ref{sec_norm}). If in addition, $\Norm$ is assumed to be $C^1$-regular (i.e.\ continuously differentiable outside of $\{0\}$), then at every point $x$ in the hypersurface  $S^{n-1}_\tnorm$, the unit outward normal $G(x)\in S^{n-1}$ in well-defined. This yields a mapping $G:S^{n-1}_\tnorm \rightarrow S^{n-1}$ that we call the Gauss map of $\Norm$. As we will show in Lemma~\ref{lem_Gauss}, the mapping $\g$ associated with the family $\{P_V\hnorm:V\in G(n,n-1) \}$ of linear and surjective projections can be expressed in terms of the inverse of $G$. This will allow us to prove the following results for families of projections onto hyperplanes.

\begin{thm}\label{thm_norm}
Let $\Norm$ be a strictly convex $C^1$-regular norm on $\R^n$.
If the Gauss map $G$ is dimension non-increasing and maps $\Hhh^{n-1}$-zero sets to $\Hhh^{n-1}$-zero sets, then the following hold for all Borel sets $A\subseteq \R^n$.
\begin{enumerate}[label={\arabic*.}, topsep=5pt, itemsep=5pt, leftmargin=30pt]
\item If $\dim A \leq n-1$, then 
\begin{enumerate}[label={\alph*)}, topsep=3pt, itemsep=3pt]
\item  $\dim (P_{w^\perp} A)= \dim A$ for $\Hhh^{n-1}$-a.e.\ $w\in S^{n-1}$,
\item For $0<\alpha\leq\dim A$, $\dim\,\{w\in S^{n-1} :  \dim (P_{w^\perp} A)<\alpha\}\leq \alpha$.
\end{enumerate}
\item If $\dim A > n-1$, then 
\begin{enumerate}[label={\alph*)}, topsep=3pt, itemsep=3pt]
\item  $\Hhh^{n-1} (P_{w^\perp} A)>0$ for $\Hhh^{n-1}$-a.e.\ $w\in S^{n-1}$,
\item $\dim\,\{w\in S^{n-1} : \mathscr{H}^{n-1}(P_{w^\perp} A)=0\}\leq 2(n-1)-\dim A$.
\end{enumerate}
\end{enumerate} 
\end{thm}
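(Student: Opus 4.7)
The plan is to deduce this from Theorem~\ref{thm_lin_proj} applied to the family $\{P^\hnorm_V : V \in G(n,n-1)\}$ of closest-point projections onto hyperplanes. As noted in the paragraph preceding Lemma~\ref{lem_Gauss}, this is a family of linear and surjective projections, so the task reduces to verifying the two regularity hypotheses of Theorem~\ref{thm_lin_proj} for its associated map $\g$, and then specialising to $m = n-1$.

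The crucial input is Lemma~\ref{lem_Gauss}, which will express $\g$ in terms of the inverse of the Gauss map. First I would identify $G(n,n-1)$ with $S^{n-1}/\{\pm 1\}$ via $w^\perp \leftrightarrow \pm w$; this identification is smooth, preserves Hausdorff dimension, and pushes $\sigma_{n,n-1}$ to a measure in the same class as $\Hhh^{n-1}$ on the projective quotient. Under this identification Lemma~\ref{lem_Gauss} reads $\g(w^\perp) = (G^{-1}(w))^\perp$. Because $\Norm$ is strictly convex and $C^1$-regular, the Gauss map $G : S^{n-1}_\tnorm \to S^{n-1}$ is continuous, injective (by strict convexity) and surjective (each $w$ is realised as the outward normal at the $\langle w,\cdot\rangle$-maximiser on the compact set $S^{n-1}_\tnorm$), hence a homeomorphism; in particular $\g$ is well-defined and bijective on $G(n,n-1)$.

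It remains to translate the hypotheses. Given $E \subseteq G(n,n-1)$ corresponding to $\tilde E \subseteq S^{n-1}$, applying the assumption that $G$ is dimension non-increasing to the set $F = G^{-1}(\tilde E)$ yields $\dim \tilde E = \dim G(F) \leq \dim F = \dim \g(E)$, so $\g$ is dimension non-decreasing. Similarly, if the set $\g(E) = G^{-1}(\tilde E)$ were $\Hhh^{n-1}$-null, then $\tilde E = G(G^{-1}(\tilde E))$ would also be null by the assumption that $G$ sends null sets to null sets; contrapositively, $\g$ maps $\sigma_{n,n-1}$-positive sets to $\sigma_{n,n-1}$-positive sets.

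Theorem~\ref{thm_lin_proj} with $m = n-1$ then applies. Its exceptional-set bounds specialise to $(n-m-1)m + \alpha = \alpha$ in item 1(b) and $(n-m)m + m - \dim A = 2(n-1) - \dim A$ in item 2(b), matching the claimed statement; item 3 of Theorem~\ref{thm_lin_proj} is vacuous because $\dim A \leq n \leq 2(n-1)$ whenever $n \geq 2$. Transferring the $\sigma_{n,n-1}$-a.e.\ conclusions back to $\Hhh^{n-1}$-a.e.\ statements on $S^{n-1}$ via $V = w^\perp$ completes the argument. I expect the main non-routine ingredient to be Lemma~\ref{lem_Gauss}, namely the geometric identification of $\ker P^\hnorm_{w^\perp}$ with the line through $G^{-1}(w)$; once that is in hand the present deduction is bookkeeping about how Hausdorff dimension and the measure class of $\sigma_{n,n-1}$ behave under the identification $G(n,n-1) \cong \mathbb{R}P^{n-1}$.
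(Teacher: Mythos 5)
Your proposal is correct and follows essentially the same route as the paper: reduce to Theorem~\ref{thm_lin_proj} via the identity $\g(w^\perp)=(G^{-1}(w))^\perp$ (Lemma~\ref{lem_norm_lin} in the paper), then transfer the hypotheses on $G$ to the required properties of $\g$ through the identification of $G(n,n-1)$ with antipodal pairs in $S^{n-1}$ and the bi-Lipschitz radial map between $S^{n-1}$ and $S^{n-1}_\tnorm$. The only detail you compress into ``bookkeeping'' --- that $G^{-1}(\tilde E)$ lives on $S^{n-1}_\tnorm$ and must be pushed back to $S^{n-1}$ by the dimension- and null-set-preserving map $u\mapsto u/|u|$ before comparing with $\sigma_{n,n-1}$ --- is exactly the step the paper writes out explicitly, so there is no gap.
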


\begin{thm}\label{thm_besfed_norm}
Let $\Norm$ be a strictly convex $C^1$-regular norm on $\R^n$ such that for all $E\subset G(n,n-1)$, $\sigma_{n,n-1}(G(E))=0$ if and only if $\sigma_{n,n-1}(E)=0$. Then, an $\Hhh^{n-1}$-measurable set $A\subseteq \R^n$ with $\Hhh^{n-1}(A)<\infty$ is purely $m$-unrectifiable if and only if $\Hhh^{n-1}(P^\hnorm_V(A))=0$ for $\sigma_{n,n-1}$-a.e.\ $V\in G(n,n-1)$. 
\end{thm}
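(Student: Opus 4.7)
The plan is to derive Theorem~\ref{thm_besfed_norm} directly from Theorem~\ref{thm_besfed_lin} by realizing the closest-point projections as a family of linear and surjective projections and translating the measure-theoretic hypothesis on the Gauss map $G$ into the corresponding hypothesis on the associated map $\g$. Since $\Norm$ is strictly convex, each closest-point projection $P_V^\hnorm$ for $V\in G(n,n-1)$ is well-defined, and (as verified in Section~\ref{sec_norm}) it is in addition linear and surjective. Hence $\{P_V^\hnorm:V\in G(n,n-1)\}$ is a family of linear and surjective projections to which Theorem~\ref{thm_besfed_lin} can potentially be applied with $m=n-1$.

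Next, I would invoke Lemma~\ref{lem_Gauss}, which expresses the associated map $\g$ in terms of $G^{-1}$. Geometrically, the kernel of $P_V^\hnorm$ is spanned by the unique (up to sign) vector $w\in S^{n-1}_\tnorm$ such that $G(w)\perp V$; equivalently, under the standard identification of $G(n,n-1)$ with $S^{n-1}/\{\pm 1\}$ via $V\leftrightarrow \pm \nu_V$ (with $\nu_V$ a Euclidean unit normal to $V$), the map $\g$ corresponds to $G^{-1}$, and hence $\g^{-1}$ corresponds to $G$ itself. Under this identification, the hypothesis $\sigma_{n,n-1}(G(E))=0\Leftrightarrow \sigma_{n,n-1}(E)=0$ of Theorem~\ref{thm_besfed_norm} is transcribed exactly into the hypothesis $\sigma_{n,n-1}(\g^{-1}(E))=0\Leftrightarrow \sigma_{n,n-1}(E)=0$ of Theorem~\ref{thm_besfed_lin}.

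Applying Theorem~\ref{thm_besfed_lin} to $\{P_V^\hnorm:V\in G(n,n-1)\}$ then yields the conclusion of Theorem~\ref{thm_besfed_norm}. The nontrivial content lies almost entirely in Lemma~\ref{lem_Gauss}, which is the purely Euclidean-geometric identification characterizing the $\Norm$-normal direction to a hyperplane as the preimage under $G$ of its Euclidean normal. Apart from this, one only needs the elementary observation that the radial identification between $S^{n-1}_\tnorm$ and $S^{n-1}$ (and hence between the two parametrizations of $G(n,n-1)$) is bi-Lipschitz, so that the equivalence of null-set preservation statements is unambiguous. I expect the main obstacle to be nothing more than the bookkeeping of checking that the identifications are compatible with the $\sigma_{n,n-1}$-measure class; once Lemma~\ref{lem_Gauss} is in place, the reduction is essentially formal.
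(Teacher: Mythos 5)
Your proposal is correct and follows essentially the same route as the paper: realize $\{P^\hnorm_V\}$ as a family of linear and surjective projections, use the identification of $\g$ with $G^{-1}$ (this is Lemma~\ref{lem_norm_lin}, not Lemma~\ref{lem_Gauss}, which only supplies that $G$ is a homeomorphism — needed so that $\g^{-1}$ corresponds to $G$), transfer the null-set hypothesis through the bi-Lipschitz radial and perpendicular identifications, and apply Theorem~\ref{thm_besfed_lin}. The paper organizes the same reduction by verifying separately that $\g$ maps positive sets to positive sets (already done in the proof of Theorem~\ref{thm_norm}) and zero sets to zero sets, but the content is identical.
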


Note that if $\Norm$ is $C^{1,1}$-regular, then the Gauss map  $G$ (which essentially is the gradient of the norm) is locally Lipschitz and hence the requirements of Theorem~\ref{thm_norm} are satisfied. Thus the following corollary is a direct consequence of Theorem~\ref{thm_norm}.

\begin{cor}\label{cor_norm}
If $\Norm$ is a strictly convex $C^{1,1}$-regular norm on $\R^n$, then the conclusions of Theorem~\ref{thm_norm} hold for the projections $P_{w^\perp}^\hnorm:\R^n\to w^\perp$, $w\in S^{n-1}$.
\end{cor}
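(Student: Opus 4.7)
The plan is to derive Corollary~\ref{cor_norm} as an immediate application of Theorem~\ref{thm_norm}. What must be verified is that, for $\Norm$ a strictly convex $C^{1,1}$-regular norm on $\R^n$, the Gauss map $G:S^{n-1}_\tnorm\to S^{n-1}$ is dimension non-increasing and sends $\Hhh^{n-1}$-null sets to $\Hhh^{n-1}$-null sets. Both of these conditions are classical properties of Lipschitz maps, so the task reduces to establishing that $G$ is Lipschitz.

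First I would identify $G$ explicitly in terms of the gradient of $\Norm$. The $C^1$-regularity of $\Norm$ together with the fact that $S^{n-1}_\tnorm$ is a level set of $\Norm$ imply that the outward unit normal at $x\in S^{n-1}_\tnorm$ equals $G(x)=\nabla\Norm(x)/|\nabla\Norm(x)|$. Since $\Norm$ is $1$-homogeneous, Euler's identity yields $\Norm(x)=\nabla\Norm(x)\cdot x$, and the equivalence of $\Norm$ with the Euclidean norm then forces $|\nabla\Norm(x)|\geq c>0$ for some constant $c$ and all $x\in S^{n-1}_\tnorm$. The $C^{1,1}$-assumption means that $\nabla\Norm$ is locally Lipschitz on $\R^n\wo\{0\}$ and hence Lipschitz on any neighbourhood of the compact sphere $S^{n-1}_\tnorm$. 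Combined with the uniform lower bound on $|\nabla\Norm|$, a routine calculation shows that the renormalization $y\mapsto y/|y|$ is Lipschitz on the image of $\nabla\Norm$, from which I would conclude that $G$ is Lipschitz.

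Finally, for any Lipschitz map of constant $L$ the inequality $\Hhh^s(G(E))\leq L^s\Hhh^s(E)$ holds for all $s\geq 0$ and all $E\subseteq S^{n-1}_\tnorm$, which yields both $\dim G(E)\leq \dim E$ and that $G$ preserves $\Hhh^{n-1}$-null sets. The hypotheses of Theorem~\ref{thm_norm} are therefore satisfied and Corollary~\ref{cor_norm} follows. I do not expect any serious obstacle here; the only point requiring care is the uniform lower bound $|\nabla\Norm|\geq c$, which relies on strict convexity and $1$-homogeneity and is essential for transferring the Lipschitz property from $\nabla\Norm$ to its normalization.
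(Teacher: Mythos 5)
Your proposal is correct and follows exactly the route the paper takes: the paper's justification is the remark preceding the corollary, namely that for a $C^{1,1}$-regular norm the Gauss map (essentially the normalized gradient of $\Norm$) is locally Lipschitz, and Lipschitz maps are dimension non-increasing and preserve $\Hhh^{n-1}$-null sets, so Theorem~\ref{thm_norm} applies. You merely supply the details (Euler's identity giving the lower bound on $|\nabla\Norm|$ and the Lipschitz continuity of the renormalization) that the paper leaves implicit.
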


Exploiting the arguments from the proof of Theorem~\ref{thm_lin_proj} allows the construction of a $C^1$-regular norm on $\R^2$ for which Theorem~\ref{thm_norm} fails.

\begin{thm}\label{thm_counter}
There exists a $C^1$-regular norm $\Norm$ on $\R^2$ and a Borel set $A\subset \R^2$ with $\dim A\leq 1$ such that 
$\Hhh^1(\{w\in S^{n-1} :  \dim(P^\hnorm_{w^\perp} A)<\dim A\})>0.$ Thus, in particular, Conclusion~1 of Theorem~\ref{thm_norm} fails for $\Norm$.
Analogously, there exists a $C^1$-regular norm on $\R^2$ for which Conclusion~2 of Theorem ~\ref{thm_norm} fails.
\end{thm}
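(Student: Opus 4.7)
The plan is to exploit Lemma~\ref{lem_Gauss}, which identifies the associated map $\g$ of the family $\{P_V^\hnorm:V\in G(2,1)\}$ as (essentially) the inverse of the Gauss map $G$. To violate Theorem~\ref{thm_norm} it therefore suffices to construct a $C^1$-regular strictly convex norm whose Gauss map severely fails the null-set condition, together with a Borel set $A\subseteq\R^2$ with $\dim A\leq 1$ whose Euclidean exceptional direction set is sent by $G$ onto a set of positive $\Hhh^1$-measure.

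For the norm I would start with a continuous, strictly increasing, \emph{singular} function $\theta\colon[0,L]\to[0,2\pi]$ with $\theta'=0$ Lebesgue-almost everywhere, $\theta(0)=0$, $\theta(L)=2\pi$, and the half-period symmetry $\theta(s+L/2)=\theta(s)+\pi$. Strictly increasing singular functions are classical, and the symmetry ensures the closing identity $\int_0^L e^{i\theta(s)}\diff s=0$ so that the arc-length curve $\gamma(s)=\int_0^s(-\sin\theta(\tau),\cos\theta(\tau))\diff\tau$ is closed and centrally symmetric (after translating its centroid to $0$). The Minkowski functional $\Norm$ of the resulting centrally symmetric convex body $K$ is then a strictly convex, $C^1$-regular norm with Gauss map $G(\gamma(s))=(\cos\theta(s),\sin\theta(s))$. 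Because the singular measure $d\theta$ is concentrated on a Lebesgue-null set $F_0\subset[0,L]$, the set $F=\gamma(F_0)\subset S^1_\tnorm$ satisfies $\Hhh^1(F)=0$ while $\Hhh^1(G(F))=2\pi$; in particular, $G$ fails the hypotheses of Theorem~\ref{thm_norm}.

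For the set $A$, observe that the fibers of $P_{w^\perp}^\hnorm$ are the lines parallel to $G^{-1}(w)$ and so coincide with those of the Euclidean projection $P_{G^{-1}(w)^\perp}^\eucl$. Hence $\dim P_{w^\perp}^\hnorm(A)=\dim P_{G^{-1}(w)^\perp}^\eucl(A)$, and the task reduces to producing a Borel $A\subset\R^2$ with $\dim A\leq 1$ whose Euclidean exceptional direction set $B_A=\{v\in S^1_\tnorm:\dim P_{v^\perp}^\eucl(A)<\dim A\}$ contains a subset $F'\subseteq F$ with $\Hhh^1(G(F'))>0$. Sharpness constructions in the spirit of Kaufman and Mattila~\cite{KaufMat1975} produce Borel sets of dimension at most $1$ with a prescribed closed set of ``bad'' Euclidean directions; applied to (an appropriate closed subset of) $F$, this yields the required $A$, and then the set of $w\in S^1$ for which $\dim P_{w^\perp}^\hnorm(A)<\dim A$ contains $G(F')$ and hence has positive $\Hhh^1$-measure, contradicting Conclusion~1 of Theorem~\ref{thm_norm}. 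The failure of Conclusion~2 is obtained by the same scheme applied to a Borel set of dimension strictly greater than $1$ whose Euclidean projections onto lines transverse to $v\in F'$ have vanishing $\Hhh^1$-measure, using the corresponding Mattila~\cite{Mattila1975} sharpness examples for the measure-positivity statement.

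The main obstacle is the second step: constructing $A$ so that $B_A$ genuinely contains the prescribed null set $F$ (or a subset of it mapped by $G$ to positive measure). Producing sets with prescribed exceptional directions is delicate and typically relies on energy/capacitary methods, and the coordination with the specific null set $F$ arising from the singular function $\theta$ requires explicit Hausdorff-dimension and push-forward estimates. A secondary technical point is checking that, under the half-period symmetry, one can still impose the closing condition while preserving the singularity and strict monotonicity of $\theta$, and that the resulting curve $\gamma$ really is $C^1$ and strictly convex, both of which follow from $\theta$ being continuous and strictly increasing.
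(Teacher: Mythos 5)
Your first step (the reduction) is sound and matches the paper: the observation that $P^\hnorm_{w^\perp}$ has the same fibers as $P^\eucl_{(G^{-1}(w))^\perp}$, so that the bad directions for the norm projections are exactly the $G$-image of the Euclidean bad directions, is precisely the content of Lemma~\ref{lem_norm_lin} and Lemma~\ref{lem_counter_2}. Your singular tangent-angle construction of a $C^1$ strictly convex norm whose Gauss map blows up an $\Hhh^1$-null set to positive measure is also essentially the same device the paper uses (a Cantor-staircase-type function, integrated and rolled up onto an arc).

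The genuine gap is in your second step, and it comes from the order in which you fix the two objects. You build the norm first, which pins down a specific null set $F\subset S^1_\tnorm$ (the set carrying the singular measure $\dm\theta$), and you then need a Borel set $A$ with $\dim A\leq 1$ whose Euclidean exceptional set contains a subset $F'\subseteq F$ with $\Hhh^1(G(F'))>0$. Your justification --- that ``sharpness constructions in the spirit of Kaufman and Mattila~\cite{KaufMat1975} produce Borel sets of dimension at most $1$ with a prescribed closed set of bad Euclidean directions'' --- is not a theorem that exists. The constructions in \cite{KaufMat1975} (and \cite{Falconer1982} for the measure statement) produce sets whose exceptional sets are \emph{specific} sets arising from number-theoretic (Eggleston-type) considerations; no known result lets one prescribe an arbitrary closed $\Hhh^1$-null set of positive dimension as a subset of an exceptional set. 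The paper itself emphasizes in Section~\ref{sec_final} that the structure of exceptional sets is open, and that it is not even known whether a set like the triadic Cantor set can occur inside one; your step would resolve exactly this open problem. The paper avoids the issue by reversing the quantifiers: it first takes the Kaufman--Mattila set $A$ of Lemma~\ref{lem_counter_1}, with whatever exceptional set $E$ it happens to have (one only knows $\dim E=d>0$ and $\Hhh^1(E)=0$), extracts a compact $K$ inside a parameterization of $E$ with $0<\Hhh^s(K)<\infty$ for some $0<s<d$, and only then builds the norm \emph{adapted to} $K$, via $f(t)=\tfrac12\big(\Hhh^s([0,t]\cap K)/\Hhh^s(K)+t\big)$, whose associated Gauss map satisfies $\Hhh^1(G(E'))\geq\Hhh^1(G(\gamma(K)))>0$. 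That construction works for an \emph{arbitrary} compact set of positive $\Hhh^s$-measure, which is exactly why adapting the norm to the set is feasible while adapting the set to a previously fixed norm is not. The same objection applies to your treatment of Conclusion~2.
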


Theorem~\ref{thm_counter} shows that Marstrand-type projection theorems do not trivially hold for families of projections induced by a norm unless the norm is induced by a scalar product; see Section~\ref{sec_final}. Thereby, it underlines the importance of Theorem~\ref{thm_norm} and in some sense shows the sharpness of the regularity condition for the norm $\Norm$ in Theorem~\ref{thm_norm}. Comparing Corollary~\ref{cor_norm} with Theorem~\ref{thm_norm} raises the question whether or not Marstrand-type theorems hold for $C^{1,\delta}$-regular norms in $\R^2$ (i.e. derivatives of $\Norm$ of first order are locally $\delta$-H\"older).  Surprisingly, the answer to this question is related to the study of the structure of exceptional sets for Euclidean projections. We will address this relation in Section~\ref{sec_final}.\\[0.3cm]
The paper is organized as follows. In Section~\ref{sec_linear}, we prove Theorems~\ref{thm_lin_proj} and~\ref{thm_besfed_lin}. In Section~\ref{sec_norm}, we prove Theorems~\ref{thm_norm} and~\ref{thm_besfed_norm}  by applying Theorems~\ref{thm_lin_proj} and~\ref{thm_besfed_lin} respectively. Section~\ref{sec_not_norm} is for Propositions and Examples underlining the independent interest of Theoremd~\ref{thm_lin_proj} and~\ref{thm_besfed_lin}. In Section~\ref{sec_counter}, we explicitly construct a $C^1$-regular norm in $\R^2$ for which Theorem~\ref{thm_norm} fails and thereby prove 
Theorem~\ref{thm_counter}. Section~\ref{sec_final} is for final remarks.

\section{Linear projections}\label{sec_linear}

In this section we prove Theorem~\ref{thm_lin_proj}. Consider a family of linear and surjective projections $\{P_V: V\in G(n,m)\}$. Recall that we defined the mapping $\g:G(n,m)\to G(n,m)$ by $$\g(V)=(\Ker P_V)^\perp.$$
The following two lemmas will be used to compare the images of a Borel set under $P_V$ and $P^\eucl_{\g(V)}$.

\begin{lem}\label{lem_linear_1}
Let $f:\R^n\to\R^d$ and $g:\R^n\to \R^m$ be linear mappings with $\Ker f=\Ker g$. Then, there exists a bijective linear mapping $h:f(\R^n)\to g(\R^n)$ such that for all $x\in \R^n$, $h(f(x))=g(x)$. 
\end{lem}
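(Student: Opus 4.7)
The plan is to define $h$ by the only formula it could possibly have, namely $h(f(x))\coloneqq g(x)$, and then verify in turn that this rule is well-defined, linear, surjective, and injective. The assumption $\Ker f = \Ker g$ is what makes every one of these steps work.

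First I would argue well-definedness. Given $y\in f(\R^n)$, choose any $x\in\R^n$ with $f(x)=y$ and declare $h(y)\coloneqq g(x)$. If $x'$ is another preimage, then $x-x'\in\Ker f=\Ker g$, so $g(x)=g(x')$; thus $h(y)$ does not depend on the choice of $x$, and $h(f(x))=g(x)$ holds by construction for every $x\in\R^n$.

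Next, linearity of $h$ is immediate from the identity $h\circ f=g$ together with the linearity of $f$ and $g$: for $y_1=f(x_1)$, $y_2=f(x_2)$ and scalars $\alpha,\beta$ we have $\alpha y_1+\beta y_2=f(\alpha x_1+\beta x_2)$, hence $h(\alpha y_1+\beta y_2)=g(\alpha x_1+\beta x_2)=\alpha g(x_1)+\beta g(x_2)=\alpha h(y_1)+\beta h(y_2)$. Surjectivity is equally quick: any $z\in g(\R^n)$ is of the form $z=g(x)=h(f(x))$. For injectivity, if $h(y)=0$ and $y=f(x)$, then $g(x)=0$, so $x\in\Ker g=\Ker f$, forcing $y=f(x)=0$; alternatively one may simply invoke rank--nullity, which gives $\dim f(\R^n)=n-\dim\Ker f=n-\dim\Ker g=\dim g(\R^n)$, so any linear surjection between these equal-dimensional spaces is automatically bijective.

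There is no real obstacle here; the lemma is essentially the first isomorphism theorem applied to the two quotients $\R^n/\Ker f$ and $\R^n/\Ker g$, which coincide as subspaces of quotients because the kernels agree. The only point that requires any care is remembering to check well-definedness before verifying the algebraic properties of $h$.
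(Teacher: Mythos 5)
Your proof is correct and complete, but it constructs $h$ by a different (and arguably cleaner) route than the paper. The paper picks a basis $u_1,\ldots,u_k$ of the common kernel $U$, extends it to a basis $u_1,\ldots,u_k,w_1,\ldots,w_{n-k}$ of $\R^n$, observes that $f(w_1),\ldots,f(w_{n-k})$ and $g(w_1),\ldots,g(w_{n-k})$ are bases of $f(\R^n)$ and $g(\R^n)$ respectively, and defines $h$ on the first basis by sending $f(w_j)\mapsto g(w_j)$; bijectivity and the identity $h\circ f=g$ are then read off from the basis description (and the degenerate cases $U=\R^n$, $U=\{0\}$ are handled separately). You instead define $h$ directly on all of $f(\R^n)$ by $h(f(x))\coloneqq g(x)$, verify well-definedness from $\Ker f=\Ker g$, and then check linearity, surjectivity and injectivity --- essentially the first isomorphism theorem applied to $\R^n/\Ker f=\R^n/\Ker g$. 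Your version is basis-free, needs no case distinction, and makes the identity $h(f(x))=g(x)$ hold by construction rather than by verification; the paper's version is more explicit and hands you the matrix of $h$ in concrete coordinates. Both hinge on exactly the same use of the hypothesis (the kernel condition is what makes $h$ well-defined for you, and what makes the images of the $w_j$ linearly independent for the paper), so either argument is a perfectly valid proof of the lemma.
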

  
\begin{proof}
In case $U\coloneqq \Ker f=\Ker g$ equals $\R^n$ or $\{0\}$, the Lemma is trivial. Therefore, we may assume without loss of generality that 
$0<k\coloneqq\dim(U)<n.$ Let $u_1,\ldots,u_k$ be a basis of $U$ and extend it to a basis $u_1,\ldots,u_k,w_1,\ldots,w_{n-k}$ of $\R^n$. 
Then, $f(w_1),\ldots,f(w_{n-k})$ is a basis of $f(\R^n)$ and $g(w_1),\ldots,g(w_{n-k})$ is a basis of 
$g(\R^n)$. 
Define a linear mapping $h:f(\R^n)\to g(\R^n)$ by setting $h(f(w_j))=g(w_j)$ for all $j=1,...,n-k$.
Then, $h$ is a bijection and for every $x\in\R^n$, $h(f(x))=g(x).$

\end{proof}

The following Lemma is a trivial consequence of Lemma~\ref{lem_linear_1}. It can be considered the key ingredient in the proofs of Theorem~\ref{thm_lin_proj} and Theorem~\ref{thm_besfed_lin}

\begin{lem}\label{lem_linear}
Let $f:\R^n\to\R^d$ and $g:\R^n\to \R^m$ be linear mappings  with $\Ker f=\Ker g$ and $A\subseteq \R^n$ a Borel sest. Then, $\dim f(A)=\dim g(A)$, and $\Hhh^{m}( f(A))=0$ if and only if $\Hhh^{m}( g(A))=0$.
\end{lem}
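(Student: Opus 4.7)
The plan is to read this off Lemma~\ref{lem_linear_1} almost directly. Since $\Ker f=\Ker g$, that lemma supplies a linear bijection $h\colon f(\R^n)\to g(\R^n)$ with $h\circ f=g$, so in particular $g(A)=h(f(A))$. By rank-nullity both $f(\R^n)$ and $g(\R^n)$ are linear subspaces of the same (finite) dimension $n-\dim\Ker f$, so $h$ is a linear isomorphism between two finite-dimensional normed spaces.

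The key observation is then that any linear isomorphism between finite-dimensional subspaces of Euclidean spaces is bi-Lipschitz, with Lipschitz constants $\|h\|$ and $\|h^{-1}\|$ (finite since all norms on a finite-dimensional vector space are equivalent and $h$ has a linear inverse). Bi-Lipschitz maps are well known to preserve Hausdorff dimension and to map $\Hhh^m$-null sets to $\Hhh^m$-null sets: if $L$ is a bi-Lipschitz constant for $h$, then for every $s\ge 0$ one has $L^{-s}\Hhh^s(E)\le \Hhh^s(h(E))\le L^{s}\Hhh^s(E)$. Applying this with $E=f(A)$ gives simultaneously $\dim f(A)=\dim h(f(A))=\dim g(A)$ and $\Hhh^m(f(A))=0\iff\Hhh^m(g(A))=0$.

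The only potentially confusing point is that $f(A)\subseteq\R^d$ while $g(A)\subseteq\R^m$ with possibly $d\ne m$, so the two Hausdorff measures are nominally computed in different ambient spaces. I would briefly note that Hausdorff measure and dimension are intrinsic metric invariants, so restricting the ambient Euclidean metric to the subspaces $f(\R^n)$ and $g(\R^n)$ gives exactly the same $\Hhh^s$-values as computing in $\R^d$ or $\R^m$; hence the bi-Lipschitz comparison above yields both claims. I do not anticipate any real obstacle, as the lemma is explicitly advertised in the text as a trivial consequence of Lemma~\ref{lem_linear_1}.
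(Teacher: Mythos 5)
Your argument is correct and is exactly the route the paper intends: the paper gives no written proof, stating only that the lemma is a trivial consequence of Lemma~\ref{lem_linear_1}, and your proposal fills in precisely that argument (the linear bijection $h$ with $h\circ f=g$ is bi-Lipschitz, hence preserves Hausdorff dimension and $\Hhh^m$-null sets). Your remark that Hausdorff measure is an intrinsic metric invariant, so the differing ambient spaces $\R^d$ and $\R^m$ cause no issue, is a worthwhile clarification.
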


\begin{proof}[Proof of Theorem~\ref{thm_lin_proj}]
Let $A\subseteq \R^n$ be a Borel set  and $0<\alpha\leq \dim(A)\leq m$. We know that 1.a) and 1.b) of Theorem~\ref{thm_lin_proj} hold for $\{P_V^\eucl:V\in V\}$, that is,
\begin{equation}\label{know_1}
\sigma_{n,m} (\{  W\in G(n,m): \dim P^\eucl_W(A)< \alpha\} )=0
\end{equation}
\begin{equation}\label{know_2}
\dim\, \{W\in G(n,m): \dim P^\eucl_W(A)< \alpha\}\leq \alpha.
\end{equation}
By Lemma~\ref{lem_linear} with $f=P_V$ and $g=P_{\g(V)}^\eucl$, it follows that, for all $V\in G(n,m)$, 
\begin{equation}\label{eq_dim_equal}
\dim P_V(A)=\dim P^\eucl_{\g(V)}(A),
\end{equation}

On the other hand, notice that
\begin{equation}\label{eqn_proof_lin}\begin{split}
& \sigma_{n,m} (\g \{V\in G(n,m): \dim P^\eucl_{\g(V)}(A)< \alpha\})\\
&=\sigma_{n,m} (\{\g(V)\in G(n,m): \dim P^\eucl_{\g(V)}(A)< \alpha\})\\
& \leq \sigma_{n,m} (\{W\in G(n,m): \dim P^\eucl_W(A)< \alpha\})\\
\end{split}
\end{equation}

Thus, by \eqref{eq_dim_equal}, \eqref{eqn_proof_lin} and the fact that $\g$ does not map $\sigma_{n,m}$-positive sets to $\sigma_{n,m}$-zero sets it follows that
$ \sigma_{n,m} (\{V\in G(n,m): \dim P_V(A)< \alpha\})=0.$
This proves 1.a).\\[0.3cm]
Furthermore, combining \eqref{know_2} and \eqref{eq_dim_equal} with the fact that $\g$ is dimension non-decreasing, yields 
\begin{equation*}\begin{split}
\dim\,\{V\in G(n,m): \dim P_V(A)< \alpha\}&=\dim \,\{V\in G(n,m): \dim P^\eucl_{\g(V)}(A)< \alpha\}\\
&\leq \dim\,\g \{V\in G(n,m): \dim P^\eucl_{\g(V)}(A)< \alpha\}\\
&=\dim\,\{\g(V)\in G(n,m): \dim P^\eucl_{\g(V)}(A)< \alpha\}\\
& \leq \dim \,\{W\in G(n,m): \dim P^\eucl_W(A)< \alpha\}\, \leq \alpha.
\end{split}
\end{equation*}
This proves 1.b). The proofs of 2 and 3 are analogous.
\end{proof}

\begin{proof}[Proof of Theorem~\ref{thm_besfed_lin}]
Let $A\subset \R^n$ be $\Hhh^m$-measurable and $\Hhh^m(A)<\infty$. 
Let \begin{eqnarray*}
E=\{V\in G(n,m):\Hhh^m(P_V(A))>0\}\\
F=\{V\in G(n,m):\Hhh^m(P^\eucl_V(A))>0\}
\end{eqnarray*}
Recall that by Lemma~\ref{lem_linear} with $f=P_V$ and $g=P_V^\eucl$ we have $\Hhh^m(P_V(A))=0$ if and only if $\Hhh^m(P^\eucl_V(A))=0$. Thus, it follows that $E$ equals the preimage $g^{-1}(F)$. Hence Theorem~\ref{thm_besfed_lin} follows from Theorem~\ref{thm_besfed}.
\end{proof}
\begin{rmk}
The above proof reveals that the conditions for Theorem~\ref{thm_besfed_lin} can be slightly weakened. Namely, the following condition on $\g$ suffices for the conclusion of Theorem~\ref{thm_besfed_lin} to hold: For every $\Hhh^m$-measurable set $A\subset \R^n$ with $\Hhh^m(A)<\infty$, the set $F_A:=\{V\in G(n,m):\Hhh^m(P^\eucl_V(A))>0\}$ is a $\sigma_{n,m}$-zero set if and only if $g^{-1}(F_A)$ is a $\sigma_{n,m}$-zero set.
\end{rmk}

\section{Codimension-one projections in normed spaces}\label{sec_norm}

In this Section, we consider closest-point projections onto hyperplanes of $\R^n$ (i.e. $m=n-1$) that are induced by a norm.\\[0.3cm]
Recall that for a strictly convex norm $\Norm$ for every linear subspace $V\in G(n,m)$ the closest-point projection $P_V^\hnorm:\R^n\to V$ given by $\| P_vx-x\|=\dist_\tnorm(x,V)$. $x\in \R^n$, is well-defined. Notice, that for every point $x\in \R^n$ the point $P_Vx$ can be characterized as the unique point in the intersection $S^{n-1}_\tnorm(x,r)\cap V$, where $r=\dist_\tnorm(V,x)$.
 Now, in addition, assume that $\Norm$ is $C^1$-regular. Then, the unit sphere $S^{n-1}_\tnorm$ with respect to $\Norm$ is a compact $C^1$-hypersurface of $\R^n$ that admits an unit outward normal $G(x)\in S^{n-1}$ at every point $x\in S^{n-1}_\tnorm$. We call the map $G:S^{n-1}_\tnorm \to S^{n-1}$, $x\mapsto G(x)$ the Gauss map of $\Norm$. Notice that by the assumption of $C^1$-regularity of $\Norm$, $G$ is continuous.
Moreover, it has the following properties.

\begin{lem}\label{lem_Gauss}
Let $\Norm$ be a strictly convex $C^1$-regular norm on $\R^n$. Then the Gauss map $G:S^{n-1}_\tnorm\to S^{n-1}$ is a homeomorphism, $G(-v)=-G(v)$ and $\langle v, G(v)\rangle\neq 0$ for all $v\in S^{n-1}$.
\end{lem}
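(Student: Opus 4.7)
The plan is to verify the three properties one by one, using the standard interplay between support functions, strict convexity, and the regularity of the norm.

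First, I would treat the symmetry $G(-v)=-G(v)$. The unit ball $B_\tnorm = \{x\in\R^n:\|x\|\le 1\}$ is symmetric about the origin because $\|-x\|=\|x\|$, so $S^{n-1}_\tnorm$ is invariant under $x\mapsto -x$. Since the map $x\mapsto -x$ is an isometry (of the Euclidean structure) that reverses orientation in each normal direction, the outward unit normal at $-v$ is the negative of the outward unit normal at $v$.

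Next, for $\langle v, G(v)\rangle\neq 0$: by $C^1$-regularity and convexity, the affine hyperplane $H_v=\{x\in\R^n:\langle x-v,G(v)\rangle=0\}$ is a supporting hyperplane of $B_\tnorm$ at $v$. Because $0$ lies in the interior of $B_\tnorm$ (the norm is positive away from $0$), $0\notin H_v$, which forces $\langle v,G(v)\rangle >0$. This also confirms that the supporting hyperplane characterisation of $G(v)$ is the right geometric object to work with.

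For the homeomorphism claim, I would first establish that $G$ is a continuous bijection and then invoke the standard fact that a continuous bijection from a compact space to a Hausdorff space is a homeomorphism; both $S^{n-1}_\tnorm$ and $S^{n-1}$ are compact and Hausdorff, and continuity of $G$ is given by the $C^1$-regularity of $\Norm$. Injectivity uses strict convexity: if $G(v_1)=G(v_2)=w$ with $v_1\neq v_2$ in $S^{n-1}_\tnorm$, then both $v_1$ and $v_2$ lie in the supporting hyperplane $H=\{x:\langle x,w\rangle=\langle v_1,w\rangle\}$ of $B_\tnorm$, so the whole segment $[v_1,v_2]$ lies in $H\cap S^{n-1}_\tnorm$, contradicting strict convexity of $B_\tnorm$. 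Surjectivity follows from a compactness/support-function argument: given $w\in S^{n-1}$, the linear functional $v\mapsto \langle v,w\rangle$ attains its maximum on the compact set $S^{n-1}_\tnorm$ at some point $v^*$, so the hyperplane $\{x:\langle x,w\rangle =\langle v^*,w\rangle\}$ supports $B_\tnorm$ at $v^*$, and by uniqueness of the outward unit normal (guaranteed by $C^1$-regularity) we conclude $G(v^*)=w$.

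The only step that requires more than a line of care is injectivity, where one must not merely say ``$v_1,v_2$ are both maximisers'' but actually exhibit a nontrivial segment on $S^{n-1}_\tnorm$ to contradict strict convexity; this is the key reason why strict convexity, not just $C^1$-regularity, is needed in the hypothesis. Everything else is a direct consequence of compactness, symmetry of the norm, and the fact that the origin is interior to $B_\tnorm$.
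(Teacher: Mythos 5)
Your proof is correct, and for surjectivity and the antipodal symmetry it follows essentially the same route as the paper: surjectivity in both cases comes from maximizing the linear functional $x\mapsto\langle x,w\rangle$ over the compact set $S^{n-1}_\tnorm$ and identifying the tangent plane at a maximizer. Where you genuinely diverge is injectivity. The paper argues by contradiction with a case analysis on the three parallel hyperplanes $H$, $v+H$, $w+H$ (where $H$ is the common tangent direction) and a connectivity argument using the continuum $S^{n-1}_\tnorm\setminus\{v\}$; you instead observe that two points with the same outward normal $w$ both maximize $\langle\cdot,w\rangle$ over $B_\tnorm$ (so the two supporting hyperplanes coincide), hence the segment $[v_1,v_2]$ lies in $\partial B_\tnorm$, contradicting strict convexity directly. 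Your version is cleaner and avoids the somewhat delicate side-of-$H$ bookkeeping in the paper. You also cover two points the paper's proof leaves out entirely: the claim $\langle v,G(v)\rangle\neq 0$ (via the supporting hyperplane missing the interior point $0$, which in fact gives the stronger $\langle v,G(v)\rangle>0$) and the continuity of $G^{-1}$ (via the compact-to-Hausdorff argument); the paper only establishes bijectivity and symmetry. The one place to tighten your write-up is the sentence asserting that $v_1$ and $v_2$ lie in the \emph{same} hyperplane $H$: this needs the one-line observation that both are maximizers of $\langle\cdot,w\rangle$, so $\langle v_1,w\rangle=\langle v_2,w\rangle$ equals the support value --- you clearly know this, but it should be stated rather than implied.
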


\begin{proof} 
Injectivity of $G$ follows immediately from strict convexity. To see this, assume that $G$ is not injective, thus, there exist two points $v,w\in S^{n-1}_\tnorm$, $v\neq w$, with  $G(v)=G(w)$. Hence for the tangent planes we have $T_vS^{n-1}_\tnorm=T_wS^{n-1}_\tnorm=:H$. Assume without loss of generality that $w$ lies on the same side of $H$ (if not, replace $w$ by $-w$).  
In case that $v+H=w+H$, strict convexity implies that $v=w$ which contradicts the choice of $v$ and $w$. Consider the case when $v+H\neq w+H$. Then, $H$, $v+H$ and $w+H$ are three parallel hyperplanes in $\R^n$. Moreover, by the assumption that $v$ and $w$ lie on the same side of $H$, $H$ is not the middle one of these three hyperplanes. 
Assume that $v+H$ is the middle one (the other case is analogous). Since $S^{n-1}_\tnorm\backslash \{v\}$ is a continuum containing $w$ and $-w$, the affine plane $v+H$ must intersect $S^{n-1}_\tnorm$ in more than one point. This contradicts strict convexity. Hence, it follows that $G$ is injective.\\[0.2cm]
Now, consider a direction $u\in S^{n-1}$ and let $V$ be its orthogonal complement. Since $S_\tnorm^{n-1}$ is compact, the set 
$\{t>0: S_{\tnorm}^{n-1}\cap (tu+V)\neq\lm\}$ has a maximum $t_0>0$. Thus, $H:=t_0u+V$ is the (affine) tangent plane of $S_{\tnorm}^{n-1}$ at the point $x$ where  $S_{\tnorm}^{n-1}$ intersects $L_u=\{tu:t\in \R\}$. Moreover, since $V$ was chosen to be orthogonal to $u$, it follows that $G(x)=u$. Hence, $G$ is surjective.\\[0.2cm]
Finally, notice that by antipodal symmetry of $\Norm$, that is $\|v\|=\|-v\|$ for all $v\in S^{n-1}$, it follows that $G(-v)=-G(v)$ for all $v\in S^{n-1}$.
\end{proof}

We will prove Theorem~\ref{thm_norm} by applying Theorem~\ref{thm_lin_proj}. Therefore, the following lemma is essential.

\begin{lem}\label{lem_norm_lin} 
For a strictly convex $C^1$-norm $\Norm$, $\{P_V^\hnorm:V\in G(n,n-1)\}$ is a family of linear and surjective projections. Moreover, for all $V\in G(n,n-1)$, $$\g(V)=\big( G^{-1}(w)\big)^\perp,$$ where $w=w(V)\in S^{n-1}$ orthogonal to $V$.
 \end{lem}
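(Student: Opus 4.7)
The plan is to identify, for each hyperplane $V\in G(n,n-1)$, an explicit line $L\subset\R^n$ complementary to $V$ such that $P_V^\hnorm$ coincides with the (automatically linear and surjective) projection onto $V$ along $L$. Once $L=\mathrm{span}(G^{-1}(w))$ is established, the formula $\g(V)=(G^{-1}(w))^\perp$ follows from the very definition $\g(V)=(\ker P_V^\hnorm)^\perp$.

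First, I would exploit strict convexity to convert the closest–point condition into a tangency condition. Fix $x\in\R^n\setminus V$, set $q=P_V^\hnorm x$ and $r=\|x-q\|=\dist_\tnorm(x,V)$. Because $\|\ndot\|$ is strictly convex, $q$ is the \emph{unique} point in $V$ realizing this distance, so the norm ball $B_\tnorm(x,r)=\{y:\|y-x\|\le r\}$ meets $V$ only at $q$; equivalently, $V$ is a supporting hyperplane to $B_\tnorm(x,r)$ at $q$. Translating by $-x$, the hyperplane $V-x$ is a supporting hyperplane to the ball $rB_\tnorm=\{y:\|y\|\le r\}$ at the boundary point $q-x$. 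Since $q\in V$ we have $V-x=(V-q)-(x-q)=V-(x-q)$, i.e.\ $V-x$ is simply the translate of $V$ passing through $q-x$.

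Next I would use the $C^1$-regularity of $\|\ndot\|$. At each $v\in S_\tnorm^{n-1}$ the ball $B_\tnorm$ has a unique tangent hyperplane, namely $v+G(v)^\perp$, where $G(v)^\perp$ denotes the Euclidean orthogonal complement of $G(v)\in S^{n-1}$. By homogeneity, the tangent hyperplane to $rB_\tnorm$ at $q-x$ equals $(q-x)+G\!\bigl((q-x)/r\bigr)^\perp$. Combined with the previous step, this forces the linear part of $V-x$ to agree with $G\!\bigl((q-x)/r\bigr)^\perp$. As $V=w^\perp$ with $w=w(V)\in S^{n-1}$ the Euclidean unit normal, I conclude
\[
G\!\bigl((q-x)/r\bigr)^\perp=w^\perp,\qquad\text{hence}\qquad G\!\bigl((q-x)/r\bigr)=\pm w.
\]
By Lemma~\ref{lem_Gauss}, $G$ is a homeomorphism and $G(-v)=-G(v)$, so $(q-x)/r=\pm G^{-1}(w)$; in particular $x-q\in\mathrm{span}(G^{-1}(w))$.

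Setting $L:=\mathrm{span}(G^{-1}(w))$, it remains to note that $L$ and $V$ are complementary, i.e.\ $L\cap V=\{0\}$: by Lemma~\ref{lem_Gauss}, $\langle G^{-1}(w),G(G^{-1}(w))\rangle=\langle G^{-1}(w),w\rangle\neq 0$, so $G^{-1}(w)\notin w^\perp=V$. Therefore, for every $x\in\R^n$ there is a unique decomposition $x=a+\ell$ with $a\in V$, $\ell\in L$, and by the previous paragraph that unique $a$ is precisely $P_V^\hnorm x$. Thus $P_V^\hnorm$ is the (linear, surjective) projection onto $V$ along $L$, so $\ker P_V^\hnorm=L=\mathrm{span}(G^{-1}(w))$ and
\[
\g(V)=(\ker P_V^\hnorm)^\perp=\bigl(G^{-1}(w)\bigr)^\perp,
\]
as claimed. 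The main obstacle is the geometric identification in the second paragraph, namely passing cleanly from the strict-convexity supporting-hyperplane statement to the identification of the direction of $x-q$ through the Gauss map; everything else (linearity, surjectivity, the form of $\g(V)$) then follows formally.
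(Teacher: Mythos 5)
Your proposal is correct and follows essentially the same route as the paper: strict convexity turns the closest-point condition into tangency of $V$ to the sphere $S^{n-1}_\tnorm(x,r)$ at $P^\hnorm_V x$, the $C^1$-Gauss map identifies the tangency direction as $\pm G^{-1}(w)$ independently of $x$, and the property $\langle v,G(v)\rangle\neq 0$ gives complementarity of $\mathrm{span}(G^{-1}(w))$ and $V$, so that $P^\hnorm_V$ is the linear projection along that line and $\g(V)=(G^{-1}(w))^\perp$. The only cosmetic difference is that you phrase the tangency step via supporting hyperplanes of the translated ball rather than directly via the tangent plane of the sphere $S^{n-1}_\tnorm(x,r)$.
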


\begin{proof}  Let $V\in G(n,n-1)$. First, recall that for all $x\in \R^n\wo V$, $P^\hnorm_V(x)$ is the unique point in the intersection $S_\tnorm^{n-1}(x,r)\cap V$, where $r=\dist_{\tnorm}(x,V)$. Therefore, $V$ must be the tangent plane of $S_\tnorm^{n-1}(x,r)$ at $P^\hnorm_V(x)$; see Figure \ref{fig_gauss_proj}.

\begin{figure}[h]
\begin{center}
\def\svgwidth{250pt}
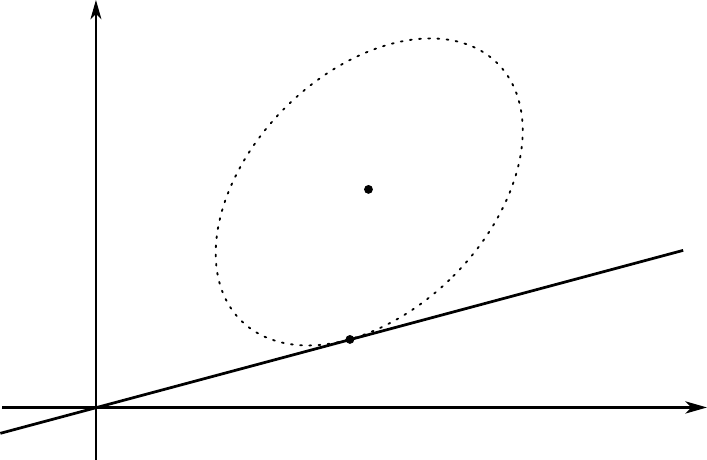
\end{center}
\caption{Gauss map and projections.}\label{fig_gauss_proj}
\end{figure}

However, this implies that the unit outward normal of  
$S_\tnorm^{n-1}(x,r)$ at $P^\hnorm_V(x)$ is orthogonal to $V$,
or, equivalently (see Figure~\ref{fig_gauss_proj}), that
$G(u)\perp V$, where $$u=\frac{P^\hnorm_V(x)-x}{\|P^\hnorm_V(x)-x\|}.$$
Let $w=w(V)\in S^{n-1}$ be a direction that is orthogonal to $V$, then for some $\lambda\in\{-1,1\}$, we have
$ G(u)=\lambda w.$
 Using the fact that $G$ is invertible and antipodally symmetric yields 
$u=\lambda G^{-1}(w). $
Thus, for every $x\in \R^n$, the projection direction $P_Vx-x$ is collinear with $u=G^{-1}(w)$ and $u$ does not depend on $x$ but only on $V$. Moreover, by Lemma~\ref{lem_Gauss}, $u=G^{-1}(w)$ is not contained in $V$. Hence, $P^\hnorm_V(x)$ is the unique intersection point of the affine line $x+L_u$ with the $m$-plane $V$ (recall that $L_v:=\{rv:r\in \R\}$ for all $v\in \R^n\woz$). This proves that $P^\hnorm_V:\R^n \to V$ is a linear and surjective mapping. Moreover, $(P^\hnorm _V)^{-1}(\{0\})=L_u$, and thus, 
$\g(V)=u^\perp=(G^{-1}(w))^\perp.$

\emph{ }

\vspace{-1cm}

\emph{ }
 \end{proof} 
 
\vspace{0.2cm}
 
Notice that in order to prove Theorem~\ref{thm_norm} it suffices to check that the map $\g$ associated with the family of closest-point projections with respect to $\Norm$ is dimension non-increasing and maps $\sigma_{n,n-1}$-positive sets to $\sigma_{n,n-1}$-positive sets. The main ingredient for this will be Lemma~\ref{lem_norm_lin}.  Lemma~\ref{lem_norm_lin} states that the associated mapping $\g:G(n,n-1)\to G(n,n-1)$ basically equals the inverse Gauss map $G^{-1}:S^{n-1}\to S^{n-1}_\tnorm$, once we identify hyperplanes $V\in G(n,n-1)$ by the outward normals $\{w,-w\}\subset S^{n-1}$. However, by our assumptions on $G$, the inverse Gauss map $G^{-1}$ has all the desired properties. In the below proof we carry out the details of this strategy.
 
 \begin{proof}[Proof of Theorem~\ref{thm_norm}]
Let $F\subseteq G(n,n-1)$ measurable. 
We will show that $\dim(\g(F))\geq \dim F$ and thereby establish that $\g$ is dimension non-decreasing.
Recall from the introduction that the notion of $\Hhh^s$-zero sets on $G(n,n-1)$ can be understood in terms of smooth chart maps for the Grassmannian manifold $G(n,n-1)$. From this fact, one easily deduces that a set $A\subset G(n,n-1)$ is an $\Hhh^s$-zero set in $G(n,n-1)$ if and only if $\{v\in S^{n-1}:v^\perp \in A\}$ is an $\Hhh^s$-zero set in $S^{n-1}$. Moreover, as a consequence of this equivalence, $\dim A =\dim \{v\in S^{n-1}:v^\perp \in A\}$. Thus, for our set $F$ it follows that
\begin{equation}\label{eqn1}
\dim \g(F)=\dim \{v\in S^{n-1}:v^\perp\in \g(F)\}.
\end{equation}

Recall that any norm on $\R^n$ is bi-Lipschitz equivalent to the Euclidean norm. In particular, so is our norm $\Norm$. This is equivalent to the fact that the mapping $S^{n-1}\to S_\tnorm^{n-1}$, $v\mapsto \frac{v}{\|v\|}$ is bi-Lipschitz equivalent (with respect to the Euclidean norm $|\ndot|$). Hence, for all sets
$A\subset S^{n-1}$, it follows that $\Hhh^s(A)=0$ if and only if $\Hhh^s(\{\frac{v}{\|v\|}\in S_\tnorm^{n-1}:v\in A\})=0$, for all $s>0$. Therefore, in particular, $\dim A=\dim \{\frac{v}{\|v\|}\in S_\tnorm^{n-1}:v\in A\}$. Combining this equality with \eqref{eqn1} yields
\begin{equation}\label{eqn2}
\dim \g(F)=\dim \{u\in S_\tnorm^{n-1}:u^\perp\in \g(F)\}.
\end{equation}

The condition that $u^\perp \in \g(F)$ in \eqref{eqn2} is equivalent to the existence of a hyperplane $V\in F$ for which $u^\perp =\g(V)$. However, by Lemma~\ref{lem_norm_lin}, the equality $u^\perp=\g(F)$ is equivalent to the equality $u=G^{-1}(w)$ where $w\in S^{n-1}$ with $w^\perp =V$. Plugging this into \eqref{eqn2} yields
\begin{equation}\label{eqn3}\begin{split}
\dim \g(F)&=\dim \{G^{-1}(w)\in S_\tnorm^{n-1}:w^\perp\in F)\}\\
&=\dim (G^{-1}\{w\in S^{n-1}:w^\perp\in F)\}).
\end{split}
\end{equation}

By Lemma~\ref{lem_Gauss}, $G$ is a homeomorphism and by our assumption it is dimension non-increasing. Thus, $G^{-1}$ is dimension non-decreasing homeomorphism. Hence, from \eqref{eqn3} and the argument above \eqref{eqn1}, it follows that
\begin{equation*}
\dim \g(F)\geq \dim \{w\in S^{n-1}:w^\perp\in F)\} = \dim F.
\end{equation*}
This proves that $\g$ is dimension non-decreasing.\\[0.3cm] 
%,
Now we prove that $\g$ maps {$\sigma_{n,n-1}$-positive} sets to $\sigma_{n,n-1}$-positive sets.
Let $F\subset G(n,n-1)$ be measurable. It follows from the definition of $\sigma_{n,n-1}$ that 
\begin{eqnarray}
\sigma_{n,n-1} (F)&= &\Hhh^{n-1}(\{ v\in S^1: v^\perp \in F \})\\\
\sigma_{n,n-1} (\g(F))&=& \Hhh^{n-1}(\{ v\in S^1: v^\perp \in \g(F) \})
\end{eqnarray}
Then, by the arguments given above equations \eqref{eqn2} and \eqref{eqn3}, we may conclude
\begin{equation}\label{eqn6}\begin{split}
\sigma_{n,n-1} (\g(F))&=\Hhh^{n-1}( \{u\in S_\tnorm^{n-1}:u^\perp\in \g(F)\})\\
&=\Hhh^{n-1}(G^{-1}\{w\in S^{n-1}:w^\perp\in F)\}).
\end{split}
\end{equation}
Recall that $G$ is a homeomorphism that maps $\Hhh^{n-1}$-zero sets to $\Hhh^{n-1}$-zero sets. Hence, in case $\sigma_{n,n-1} (F)>0$ it follows that 
\begin{equation*}
\sigma_{n,n-1} (\g(F))=\Hhh^{n-1}(G^{-1}(\{w\in S^{n-1}:w^\perp\in F\}))>0.
\end{equation*}

\emph{}

\vspace{-20pt}

\end{proof}

\begin{proof}[Proof of Theorem~\ref{thm_norm}]
Given the above proof of Theorem~\ref{thm_norm}, in order to prove Theorem~\ref{thm_besfed_norm}, it suffices to check that $\g$ maps $\sigma_{n,n-1}$-zero sets to $\sigma_{n,n-1}$-zero sets.
Let $F\subset G(n,n-1)$ be a $\sigma_{n,n-1}$-zero set. Since zero sets are measurable, by \eqref{eqn6}, it follows that \begin{equation}\label{eqn7}
\sigma_{n,n-1} (\g(F))=\Hhh^{n-1}(G^{-1}\{w\in S^{n-1}:w^\perp\in F)\}).
\end{equation}
Recall that $G$ is a homeomorphism and that by assumption the preimages of zero sets under $G^{-1}$ are zero sets. Thus, by \eqref{eqn7}, it follows that  $\sigma_{n,n-1} (\g(F))=0$.
\end{proof}

\section{Linear projections that are not induced by a norm}\label{sec_not_norm}

In this section we emphasize the independent interest of Theorems~\ref{thm_lin_proj} and~\ref{thm_besfed_lin}. Namely, we will show that there exist many families of linear and surjective projections onto hyperplanes satisfying the conditions of Theorem~\ref{thm_lin_proj} that cannot be induced by a norm. First of all, notice that Theorems~\ref{thm_lin_proj} and~\ref{thm_besfed_lin} apply in all codimensions (i.e.\ for all $1\leq m<n$) while Theorems~\ref{thm_norm} and~\ref{thm_besfed_norm} only apply for codimension $1$ (i.e.\ $m=n-1$). Indeed, projections induced by a norm are in general not linear if the codimension is larger that $1$; see Section~\ref{sec_final}. In the sequel of this section, we will show that also for codimension $1$ there are many natural families of linear and surjective projections that are not induced by a norm.
\\[0.3cm]
Given a mapping $\g:G(n,m)\to G(n,m)$ we may define a family of linear and surjective projections
\begin{equation}\label{def_P_of_g}
P_V(x)=P_{\g(V)}^\eucl (x),
\end{equation}
$V\in G(n,m)$. Then, the associated mapping \eqref{def_g(V)} for this family of projections $\{P_V: V\in G(n,m)\}$ is the given mapping $\g$. Thus, if $\g$ is dimension non-decreasing and does not map $\sigma_{n,m}$-positive sets to $\sigma_{n,m}$-zero sets, then Theorem~\ref{thm_lin_proj} applies to the family $\{P_V: V\in G(n,m)\}$.
Notice that in order for a mapping $\g:G(n,n-1)\to G(n,n-1)$  to satisfy these conditions, properties such as continuity or injectivity are not required. However, for families of linear and surjective projections that are induced by a strictly convex $C^1$-norm it is known that $\g$ is given by the inverse Gauss map $G^{-1}$. Recall from Lemma~\ref{lem_Gauss} that $G^{-1}$ is known to be a homeomorphism in this setting. Therefore, we may conclude the following proposition.

\begin{prop}\label{prop1}
Every dimension non-decreasing mapping $\g:G(n,n-1)\to G(n,n-1)$ that does not map $\sigma_{n,n-1}$-positive sets to $\sigma_{n,n-1}$-zero sets and fails to be continuous and injective induces a family of linear and surjective projections that satisfies Theorem~\ref{thm_lin_proj} and is not given by a strictly convex norm on $\R^n$.
\end{prop}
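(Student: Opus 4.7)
The proof splits naturally into two parts: (a) verify that the associated family $\{P_V\}$ satisfies the hypotheses (and hence the conclusions) of Theorem~\ref{thm_lin_proj}; and (b) show that this family cannot coincide with the closest-point projections of any strictly convex norm. Part (a) is a quick check, while part (b) is the genuine content of the proposition and relies on the Gauss-map description from Section~\ref{sec_norm}.

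For (a), I would set $P_V(x):=P^\eucl_{\g(V)}(x)$ as in~\eqref{def_P_of_g} (equivalently, the linear projection onto $V$ along $\g(V)^\perp$ after the canonical identification of $\g(V)$ with $V$). Each $P_V$ is then tautologically linear and surjective with $\ker P_V=\g(V)^\perp$, so the associated mapping defined in~\eqref{def_g(V)} satisfies $(\ker P_V)^\perp=\g(V)$ and therefore coincides with the given $\g$. The two assumed properties of $\g$, namely being dimension non-decreasing and not sending $\sigma_{n,n-1}$-positive sets to $\sigma_{n,n-1}$-zero sets, are exactly the standing hypotheses of Theorem~\ref{thm_lin_proj}, so its conclusions apply to $\{P_V\}$.

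For (b), I would argue by contradiction. Suppose $\{P_V\}$ coincides with the family $\{P_V^\hnorm\}$ of closest-point projections of some strictly convex (and hence, in the setting where the Gauss-map description is available, $C^1$-regular) norm $\Norm$. By Lemma~\ref{lem_norm_lin}, the associated mapping then factors as $V\mapsto (G^{-1}(w(V)))^\perp$, where $w(V)\in S^{n-1}$ is a unit normal to $V$ and $G$ is the Gauss map of $\Norm$. Lemma~\ref{lem_Gauss} guarantees that $G$, and hence $G^{-1}$, is a homeomorphism satisfying $G^{-1}(-w)=-G^{-1}(w)$. This antipodal symmetry, together with $u^\perp=(-u)^\perp$, absorbs the sign ambiguity in the choice of $w(V)$, so the composition descends to a well-defined self-homeomorphism of $G(n,n-1)$. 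In particular $\g$ would be both continuous and injective, contradicting the hypothesis that it fails at least one of these properties.

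The only delicate point is the passage from the sign-ambiguous unit normal $w(V)\in S^{n-1}$ to a genuine single-valued, continuous, injective map on the Grassmannian $G(n,n-1)$ rather than on its double cover $S^{n-1}$; once the antipodal symmetry of $G^{-1}$ is invoked this is routine, and the rest of the argument is immediate from the two invoked lemmas. No separate verification is needed that the proposition produces non-trivial examples: any $\g$ obtained, say, by altering the inverse Gauss map on a $\sigma_{n,n-1}$-null set (to destroy injectivity or continuity) while preserving the two measure/dimension hypotheses yields a concrete witness.
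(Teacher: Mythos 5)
Your proposal is correct and follows essentially the same route as the paper: part (a) is exactly the paper's observation that the family \eqref{def_P_of_g} has associated mapping $\g$ and hence satisfies the hypotheses of Theorem~\ref{thm_lin_proj}, and part (b) is the paper's contradiction via Lemma~\ref{lem_norm_lin} and the homeomorphism property of $G$ from Lemma~\ref{lem_Gauss}. Your explicit treatment of the antipodal sign ambiguity in passing from $S^{n-1}$ to $G(n,n-1)$ is a small refinement the paper leaves implicit, but it does not change the argument.
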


Moreover, as the following Lemma shows, any mapping $\g:G(n,n-1)\to G(n,n-1)$ that is given in terms of the inverse Gauss map of a strictly convex $C^1$-norm possesses at least two fixed points.

\begin{lem}\label{lem_g_fixed}
There exist two vectors $v,w\in S^{n-1}_\tnorm$, $v\notin\{w,-w\}$, such that $G(v)=\frac{v}{|v|}$ and $G(w)=\frac{w}{|w|}$.
\end{lem}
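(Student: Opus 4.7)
The plan is to produce the two vectors as points at which the Euclidean norm $|\ndot|$ attains its extrema on the compact $C^1$-hypersurface $S^{n-1}_\tnorm$. Set $R=\max\{|u|:u\in S^{n-1}_\tnorm\}$ and $r=\min\{|u|:u\in S^{n-1}_\tnorm\}$, both attained by compactness, and pick a maximizer $v$ and a minimizer $w$. The two candidates will be $v$ and $w$, and the task splits into three checks: $G(v)=v/|v|$, $G(w)=w/|w|$, and $v\notin\{w,-w\}$.

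The first step is a supporting-hyperplane sandwich argument at $v$. The choice of $R$ and the positive homogeneity of $\Norm$ give the inclusion $\bar B_\tnorm(0,1)\subseteq \bar B(0,R)$ of the $\tnorm$-unit ball inside the closed Euclidean ball of radius $R$, and $v$ lies on both boundaries. The Euclidean ball is smooth, so its unique supporting hyperplane at $v$ is $v+v^\perp$. Because $\bar B_\tnorm(0,1)$ is nested inside $\bar B(0,R)$, the hyperplane $v+v^\perp$ is also a supporting hyperplane of $\bar B_\tnorm(0,1)$ at $v$; by the $C^1$-regularity of $\Norm$ this must be the unique tangent hyperplane of $S^{n-1}_\tnorm$ at $v$. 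Since the origin lies in the interior of $\bar B_\tnorm(0,1)$, the outward unit normal points away from the origin, which forces $G(v)=v/|v|$.

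Next I would run the symmetric argument at $w$, now with the inclusion reversed. If $|x|<r$ and $\|x\|>1$, then $x/\|x\|\in S^{n-1}_\tnorm$ would have Euclidean norm $|x|/\|x\|<r$, contradicting the minimality of $r$; hence $\bar B(0,r)\subseteq \bar B_\tnorm(0,1)$, and $w$ lies on both boundaries. The same unique-supporting-hyperplane reasoning, with the smooth body now inside rather than outside, again identifies $v+v^\perp$ at $w$ and orients the normal outward, yielding $G(w)=w/|w|$.

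Finally I would verify $v\notin\{w,-w\}$. If $r<R$ this is automatic, since $|v|=R\neq r=|\pm w|$. In the degenerate case $r=R$, the Euclidean norm is constant on $S^{n-1}_\tnorm$, so $S^{n-1}_\tnorm$ coincides with a Euclidean sphere of radius $R$; then $\Norm$ is a scalar multiple of $|\ndot|$, every $u\in S^{n-1}_\tnorm$ satisfies $G(u)=u/|u|$, and any two non-antipodal points serve as $v$ and $w$. The main obstacle I anticipate is the minimizer step: one must leverage positive homogeneity to promote the pointwise bound $|u|\geq r$ on $S^{n-1}_\tnorm$ to the ball inclusion $\bar B(0,r)\subseteq \bar B_\tnorm(0,1)$, as only this inclusion places $\bar B(0,r)$ on the interior side of the tangent hyperplane and thus pins the outward normal of $S^{n-1}_\tnorm$ at $w$ to the radial direction $+w/|w|$ rather than $-w/|w|$.
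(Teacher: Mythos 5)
Your proof is correct. It selects the same two candidate points as the paper --- a Euclidean-norm maximizer and minimizer on the compact set $S^{n-1}_\tnorm$ --- but justifies the key step differently. The paper argues variationally: it takes a $C^1$-curve $\gamma$ in $S^{n-1}_\tnorm$ through the extremal point and differentiates $\langle\gamma(t),\gamma(t)\rangle$ to conclude that the radial direction is orthogonal to the tangent space, then fixes the sign of the normal by the outward-pointing convention. You instead use convex geometry: the sandwich inclusions $\bar B_\tnorm(0,1)\subseteq\bar B(0,R)$ and $\bar B(0,r)\subseteq\bar B_\tnorm(0,1)$, touching at $v$ and $w$ respectively, force the unique supporting (hence tangent, by $C^1$-regularity) hyperplane of $S^{n-1}_\tnorm$ at the touching point to coincide with that of the Euclidean sphere, and the inclusion automatically orients the normal outward. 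Your route has the minor advantage of pinning down both the hyperplane and the sign of the normal in one stroke, and your treatment of the degenerate case $r=R$ is slightly more careful than the paper's: you correctly observe that $\Norm$ is then a scalar multiple of $\eNorm$ (not necessarily equal to it) and that any two non-antipodal points then work, whereas the paper phrases this case a bit loosely. One cosmetic slip: in the minimizer step the relevant hyperplane is $w+w^\perp$, not $v+v^\perp$, but the intended argument is unambiguous.
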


\begin{proof}
Let $v_0\in S^{n-1}_\tnorm$ be a point that maximizes the Euclidean distance to the origin among all $v\in S^{n-1}_\tnorm$. 
Let ${\gamma:(-\epsilon,\epsilon)\to S^{n-1}_\tnorm}$ be a $C^1$-curve for which $\gamma(0)=v_0$. Thus, $\dot{\gamma}(0)\in T_{v_0} S^{n-1}_\tnorm$. Moreover, by choice of $v_0$ and the product rule for derivations, it follows that 
$$0=\frac{\dm}{\dm t}\langle \gamma(t),  \gamma(t) \rangle\,|_{t=0}=2\langle \dot{\gamma}(0),  \gamma(0) \rangle.$$ 
Since $G(v_0)$ is orthogonal to $T_{v_0}S^{n-1}_\tnorm$ it follows that $G(v_0)=\pm \frac{v_0}{|v_0|}$.
Since $G(v_0)$ points outward of $S^{n-1}_\tnorm$ at $v_0$, hence $G(v_0)=\frac{v_0}{|v_0|}$. Analogously, one proceeds for a point $w_0$ that minimizes the Euclidean distance to the origin among all $w\in S^{n-1}_\tnorm$. Then, unless $\Norm$ equals the Euclidean norm~$\eNorm$, we have $v_0\neq w_0$. Notice that for $\Norm=\eNorm$ the lemma is trivially true.
\end{proof}

Lemma~\ref{lem_g_fixed} immediately implies the following proposition.

\begin{prop}\label{prop2}
Every dimension non-decreasing mapping $\g:G(n,n-1)\to G(n,n-1)$ that does not map $\sigma_{n,n-1}$-positive sets to $\sigma_{n,n-1}$-zero sets and fails to have two fixed points, by \eqref{def_P_of_g} induces a family of linear and surjective projections that satisfies Theorem~\ref{thm_lin_proj} and is not given by a strictly convex norm on $\R^n$.
\end{prop}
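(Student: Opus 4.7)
The plan is to proceed in direct analogy with the proof of Proposition~\ref{prop1}, invoking Lemma~\ref{lem_g_fixed} at the crucial step. The statement contains two claims: first, that the family $\{P_V:V\in G(n,n-1)\}$ defined by \eqref{def_P_of_g} satisfies the hypotheses (hence the conclusion) of Theorem~\ref{thm_lin_proj}; and second, that this family cannot be induced by a strictly convex norm on $\R^n$.

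For the first claim I would begin by observing that $\Ker P_V = \Ker P^\eucl_{\g(V)} = \g(V)^\perp$, so the mapping associated with $\{P_V\}$ via \eqref{def_g(V)} is $\g$ itself. Since by hypothesis $\g$ is dimension non-decreasing and does not send $\sigma_{n,n-1}$-positive sets to $\sigma_{n,n-1}$-zero sets, Theorem~\ref{thm_lin_proj} applies verbatim to this family. This step is entirely parallel to the first half of the proof of Proposition~\ref{prop1}.

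For the second claim I would argue by contradiction. Assume that $\{P_V\}$ is induced by a strictly convex (necessarily $C^1$-regular) norm $\Norm$ on $\R^n$. Then by Lemma~\ref{lem_norm_lin} we have $\g(V)=(G^{-1}(w))^\perp$ whenever $w\in S^{n-1}$ is a unit normal to $V$. Consequently a hyperplane $V = w^\perp$ is a fixed point of $\g$ if and only if $G^{-1}(w)$ is parallel to $w$; equivalently, if and only if there exists $v\in S^{n-1}_\tnorm$ with $G(v) = v/|v|$ (up to sign) and $V=v^\perp$. Lemma~\ref{lem_g_fixed} produces two such vectors $v_0,w_0\in S^{n-1}_\tnorm$ with $v_0\notin\{w_0,-w_0\}$; the corresponding hyperplanes $v_0^\perp$ and $w_0^\perp$ are distinct (since $v_0$ and $w_0$ are linearly independent) and both are fixed by $\g$, contradicting the assumption that $\g$ fails to have two fixed points.

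The argument is essentially a translation of Lemma~\ref{lem_g_fixed} into the language of $\g$, and presents no substantial obstacle. The only conceptual point worth emphasizing is the bijective correspondence between fixed hyperplanes of $\g$ and those directions $\pm v\in S^{n-1}_\tnorm$ at which the Gauss map $G$ sends $v$ to its Euclidean normalization, i.e., the Euclidean extremal directions of the unit ball of $\Norm$ produced by Lemma~\ref{lem_g_fixed}.
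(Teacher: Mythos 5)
Your argument is correct and is essentially the paper's own: the paper simply asserts that Lemma~\ref{lem_g_fixed} immediately implies the proposition, and you have filled in exactly the intended details, namely that the associated mapping of the family \eqref{def_P_of_g} is $\g$ itself (so Theorem~\ref{thm_lin_proj} applies), and that via Lemma~\ref{lem_norm_lin} a fixed hyperplane of $\g$ corresponds precisely to a direction $v\in S^{n-1}_\tnorm$ with $G(v)=v/|v|$, of which Lemma~\ref{lem_g_fixed} supplies two inducing distinct hyperplanes. One small caveat: your parenthetical claim that a strictly convex norm is necessarily $C^1$-regular is false in general, but since Lemma~\ref{lem_norm_lin} and Lemma~\ref{lem_g_fixed} are only available for $C^1$-norms and the paper's statement and justification carry the same implicit restriction, this does not separate your proof from the paper's.
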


Propositions~\ref{prop1} and~\ref{prop2} allow the construction of many families of linear and surjective projections that are not induced by a norm and for which Theorem~\ref{thm_lin_proj} holds.
In particular, it is easy to explicitly define and illustrate such families  in $\R^2$. Consider the following simple example. For every line $L\in G(2,1)$, let $\alpha(L)\in (0,\pi)$ be some angle and define $h(L)\in G(2,1)$ to be the line that makes a counter-clockwise angle $\alpha(L)$ with $L$. By definition of $h$, for every $L\in G(2,1)$ and every $x\in \R^2$, there exist unique unique $x_L\in L$ and $x_{h(L)}\in h(L)$ such that $x=x_L+x_{h(L)}$. Define $P_L:\R^2\to L$ by $P_Lx=x_L$; see Figure~\ref{fig_ex_linear}.
Notice that then $\g(L)=(h(L))^\perp$. Hence a line $L\in G(2,1)$ is a fixed point of $\g$ if and only if $\alpha(L)=\frac{\pi}{2}$. Therefore, if $\alpha(L)\neq \frac{\pi}{2}$ for all $L\in G(2,1)$, by Proposition~\ref{prop2}, the family $\{P_L:L\in (2,1)\}$ is not induced by a norm. In particular, if $\alpha$ is constant and not equal to $\frac{\pi}{2}$, then the family $\{P_L:L\in (2,1)\}$ is not induced by a norm and trivially satisfies Theorem~\ref{thm_lin_proj}.

\begin{figure}[h] 
\begin{center}
\def\svgwidth{170pt}
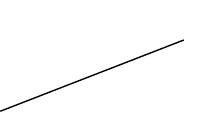
\end{center}
\caption{The projections $P_L:\R^2\to L$ induced by $h:G(2,1)\to (0,\pi)$.}\label{fig_ex_linear}
\end{figure}

\section{A norm for which Marstrand-type theorems fail}\label{sec_counter}

It is easy to construct families of linear and surjective projections for which Marstrand-type projection theorem fails. Similar examples are obtained from norms for which the Gauss map is not defined or multivalued for some points; see~\cite[Figures\,4\,and\,6]{BaloghIseli2018}. This raises the natural question, whether there exists a $C^1$- regular norm on $\R^n$ for which Marstrand-type theorems fail for projections onto hyperplanes. In this section, we will construct such a norm on $\R^2$ and thereby prove Theorem~\ref{thm_counter}.\\[0.3cm]
The following lemmas will be used in the proof of Theorem~\ref{thm_counter}. 

\begin{lem}\label{lem_counter_1}
For $0<d<2$, there exists a Borel set $A\subset \R^n$ of dimension $\dim A=d$ whose exceptional set $E=\{w\in S^1: \dim P^\eucl_{w^\perp} A< \min \{\dim A, 1\}$ for the family of orthogonal projections is a set of dimension $\dim E =d$
\end{lem}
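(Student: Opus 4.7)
The statement is the (well-known) sharpness of Kaufman's exceptional-set bound, namely part 1.b) of Theorem~\ref{thm_euclidean} with $n=2$, $m=1$, $\alpha=d$. That theorem already provides the matching upper bound $\dim E \le d$, so only the lower bound is in question; the classical reference is Kaufman--Mattila~\cite{KaufMat1975}, with a textbook treatment in \cite[Ch.~4]{Mattila1995}. Note that $E\subseteq S^1$ forces $\dim E \le 1$, so the assertion is only geometrically meaningful for $0<d\le 1$; for $1<d<2$ I would interpret $\dim E = d$ as $\dim E = \min\{d,1\}=1$, which is what is needed in Section~\ref{sec_counter} and which can be produced from a $d=1$ example by taking a Cartesian product with a set of dimension $d-1$ on a coordinate axis.

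For $0<d\le 1$ the plan has two steps. First, I would fix a compact set $E\subseteq S^1$ of Hausdorff dimension exactly $d$, for instance $E=\{(\cos\theta,\sin\theta):\theta\in C\}$ with $C\subseteq [0,\pi)$ a self-similar Cantor set of contraction ratio tuned to dimension $d$. Second, I would build $A\subseteq\R^2$ as the attractor of a self-similar IFS $\{\phi_1,\ldots,\phi_N\}$ of affine contractions with common ratio $\lambda = N^{-1/d}$, placed in sufficiently generic position in the plane so that the open set condition holds and the Moran formula gives $\dim A = d$. The translation parts of the $\phi_i$ are then tuned so that, for every $w\in E$, the projected IFS $\{P^\eucl_{w^\perp}\phi_i\}$ on $w^\perp$ has exact algebraic overlaps that force $\dim P^\eucl_{w^\perp} A<d$.

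The main obstacle is the \emph{uniform} overlap condition over all $w\in E$: the translation parameters must be chosen so that overlaps occur on the entire parameter set $E$, not just on a single or generic direction. The standard remedy, due to Kaufman and Mattila, is measure-theoretic. I would produce a Frostman measure $\nu$ on $E$ of exponent arbitrarily close to $d$, lift it to a natural self-similar probability measure $\mu$ on $A$, and show via a potential-energy estimate that the set of $w$ where $(P^\eucl_{w^\perp})_*\mu$ has a strictly smaller correlation dimension has positive $\nu$-measure. Combining the resulting lower bound $\dim E\ge d$ with the Kaufman upper bound from Theorem~\ref{thm_euclidean}(1.b) yields the equality $\dim E = d$.
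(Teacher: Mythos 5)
Your proposal diverges from the paper in a way that matters. The paper's proof of this lemma is essentially a citation: for $0<d<1$ it invokes the Kaufman--Mattila sharpness example \cite{KaufMat1975} directly (a compact $A$ with $\dim A=d$ whose dimension-drop exceptional set has dimension exactly $d$, Borel by \cite{Kaufman1968}, and $\Hhh^1$-null by Marstrand), and for $1\le d<2$ it switches to Falconer's example \cite{Falconer1982}, where the relevant exceptional set is $E=\{w\in S^1:\Hhh^1(P^\eucl_{w^\perp}A)=0\}$ and has dimension $2-d>0$. You correctly identified the right reference for the first case, but your attempt to reconstruct it from a single finite self-similar IFS has a gap: for a fixed finite system of similarities, the set of directions $w$ in which the projected system has exact algebraic overlaps is countable (each overlap is an algebraic condition on $w$), so no choice of translation parts produces overlaps on an uncountable, dimension-$d$ set of directions. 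The actual Kaufman--Mattila construction is number-theoretic (based on \cite{Eggleston}), not a single IFS; moreover, a drop in the correlation dimension of $(P^\eucl_{w^\perp})_*\mu$ does not by itself force $\dim P^\eucl_{w^\perp}A<d$, so the energy estimate you sketch does not close the argument.

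The more serious problem is your treatment of $1<d<2$. You propose to read the conclusion as $\dim E=\min\{d,1\}=1$ for the dimension-drop set, but this is impossible: by Theorem~\ref{thm_euclidean}(2.b) with $n=2$, $m=1$, the set $\{w:\dim P^\eucl_{w^\perp}A<1\}$ is contained in $\{w:\Hhh^1(P^\eucl_{w^\perp}A)=0\}$, which has dimension at most $2-d<1$ whenever $\dim A=d>1$. So no Cartesian-product (or sumset) construction can achieve what you aim for. The lemma's statement is admittedly imprecise here, but what Section~\ref{sec_counter} actually uses for $1\le d<2$ is only that the \emph{measure}-exceptional set $\{w:\Hhh^1(P^\eucl_{w^\perp}A)=0\}$ has positive dimension (namely $2-d$), which is exactly what Falconer's sharpness example provides; your proposal targets the wrong exceptional set and an unattainable dimension.
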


\begin{proof}
Let $0<d< 1$. As established in~\cite{KaufMat1975} ($0<d<1$), there exists a compact set $A\subset \R^2$ of dimension $d$ such that the exceptional set $E=\{w\in S^1:  \dim(P^\eucl_{w^\perp}(A))< d\}$ is a set of dimension $\dim (E)=d$. Moreover, by~\cite{Kaufman1968} $E$ is a Borel set and by Marstrand's theorem it follows that $\Hhh^1(E)=0$. Let $1\leq d<2$, then by~\cite{Falconer1982}, there exists a compact set $A\subset \R^2$ of dimension $d$ such that the exceptional set $E=\{w\in S^1:  \Hhh^1 (P^\eucl_{w^\perp}(A))=0 \}$ is a set of dimension $\dim (E)=2-d>0$. Again, this set $E$ is a Borel set and by Marstrand's theorem it follows that $\Hhh^1(E)=0$.
\end{proof}

\begin{lem}\label{lem_counter_2}
Let $\Norm$ be a strictly convex $C^1$-regular norm on $\R^2$. Consider closest-point projections $P^\hnorm_{w^\perp}:\R^n \to w^\perp$, $w\in S^{n-1}$ and the Gauss map $G:S_\tnorm^1\to S^1$ . Let $0<d<1$ (resp. $1\leq d<2$) and let $A\subset \R^n$ and $E\subset S^1$ be the sets from  Lemma~\ref{lem_counter_1}. Let $E'=\{u\in S^1_\tnorm: \frac{u}{\|u\|}\in E\}$ Then, whenever $\Hhh^1 (G(E'))>0$, Conclusion~1 (resp. Conclusion~2) of Theorem~\ref{thm_norm} fails for $\Norm$.
\end{lem}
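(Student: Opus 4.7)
\textbf{Proof plan for Lemma~\ref{lem_counter_2}.}
The strategy is to identify the exceptional set for $\Norm$-projections of $A$ (as a subset of $S^1$) with the image $G(E')$ of the Euclidean exceptional set $E'$ under the Gauss map; then the hypothesis $\Hhh^1(G(E'))>0$ immediately yields the desired failure.

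First I would fix $w\in S^1$ and compare the two projections $P^\hnorm_{w^\perp}$ and $P^\eucl_{\g(w^\perp)}$. By Lemma~\ref{lem_norm_lin}, these are both linear and surjective mappings $\R^2\to \R^1$, and by construction they have the same kernel $\Ker P^\hnorm_{w^\perp}=(\g(w^\perp))^\perp$. Hence Lemma~\ref{lem_linear} (with $f=P^\hnorm_{w^\perp}$, $g=P^\eucl_{\g(w^\perp)}$) gives the equality
\begin{equation*}
\dim P^\hnorm_{w^\perp}(A)=\dim P^\eucl_{\g(w^\perp)}(A),
\end{equation*}
and, analogously, $\Hhh^{1}(P^\hnorm_{w^\perp}(A))=0$ if and only if $\Hhh^{1}(P^\eucl_{\g(w^\perp)}(A))=0$. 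Applying Lemma~\ref{lem_norm_lin} once more, $\g(w^\perp)=(G^{-1}(w))^\perp$, so the right-hand sides are computed in the Euclidean exceptional set along the line $(G^{-1}(w))^\perp$.

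Next I would translate membership in the exceptional set from $G(2,1)$ to $S^1$. Identifying a line $\ell\in G(2,1)$ with the pair $\{\pm v\}\subset S^1$ of its unit Euclidean normals, the set $E\subset S^1$ from Lemma~\ref{lem_counter_1} consists of exactly those $v$ for which $\dim P^\eucl_{v^\perp}(A)<d$ (resp.\ $\Hhh^1(P^\eucl_{v^\perp}(A))=0$). For a point $u=G^{-1}(w)\in S_\tnorm^1$, the line $(G^{-1}(w))^\perp$ has Euclidean unit normal $u/|u|$; hence by definition of $E'$, the condition $\dim P^\eucl_{(G^{-1}(w))^\perp}(A)<d$ is equivalent to $G^{-1}(w)\in E'$, i.e.\ to $w\in G(E')$. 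Combining this with the equality from the previous step shows
\begin{equation*}
\{w\in S^1:\dim P^\hnorm_{w^\perp}(A)<d\}\;=\;G(E'),
\end{equation*}
and the analogous identity for $\Hhh^1$-nullity of the image.

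Finally, since by hypothesis $\Hhh^1(G(E'))>0$ and since $\dim A=d$ by the choice of $A$ in Lemma~\ref{lem_counter_1}, the exceptional set
$\{w\in S^1:\dim P^\hnorm_{w^\perp}(A)<\dim A\}$ has positive $\Hhh^1$-measure in the case $0<d<1$ (negating Conclusion~1 of Theorem~\ref{thm_norm}), and the set
$\{w\in S^1:\Hhh^1(P^\hnorm_{w^\perp}(A))=0\}$ has positive $\Hhh^1$-measure in the case $1\leq d<2$ (negating Conclusion~2).

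The only delicate point is bookkeeping: one must check carefully that the identification $\ell\leftrightarrow\{\pm v\}$ between $G(2,1)$ and $S^1/\{\pm 1\}$ respects the $\Hhh^1$-measure class (immediate from the smooth chart structure on $G(2,1)$ discussed at the start of Section~\ref{sec_norm}) and that the Euclidean-normalization map $S_\tnorm^1\to S^1$, $u\mapsto u/|u|$, being bi-Lipschitz, preserves both dimension and positivity of $\Hhh^1$; these are precisely the facts already used in the proof of Theorem~\ref{thm_norm}, so no new work is needed.
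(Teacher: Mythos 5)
Your argument is correct and follows essentially the same route as the paper's proof: both use Lemma~\ref{lem_linear} to equate the exceptional behavior of $P^\hnorm_{w^\perp}$ with that of $P^\eucl_{(G^{-1}(w))^\perp}$ via Lemma~\ref{lem_norm_lin}, and then identify the resulting exceptional set with $G(E')$. Your version merely spells out more explicitly the bookkeeping between $G(2,1)$, $S^1$, and $S^1_\tnorm$ that the paper compresses into a single chain of equalities.
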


The  proof of Lemma~\ref{lem_counter_2} is very similar to the proofs of Theorem~\ref{thm_lin_proj} and Theorem~\ref{thm_norm}.

\begin{proof} 
Consider the case when $0<d<1$. By Lemma~\ref{lem_linear} (applied as in the proof of Theorem~\ref{thm_lin_proj}) and Lemma~\ref{lem_norm_lin} we have
\begin{equation}\begin{split}
\Hhh^1(\{v\in S^1:\dim P^\hnorm_{v^\perp}A< \dim A\}) 
&= \Hhh^1(\{w\in S^1:\dim P_{(G^{-1}(w))^\perp}A< \dim A\})\\
&= \Hhh^1(\{G(u)\in S^1:\dim P_{u^\perp}A< \dim A\})\\
&= \Hhh^1(G(\{u\in S_\tnorm^1:\dim P_{u^\perp}A< \dim A\}))\\
&= \Hhh^1(G(E'))>0.\\
\end{split}
\end{equation}
Hence, Conclusions~1.a) and~1.b) of Theorem~\ref{thm_norm} fail.
The case when $1\leq d<2$ is analogous. Then, Conclusions~2.a and~2.b of Theorem~\ref{thm_norm} fail.
\end{proof}

The following two lemmas outsource some technicalities from the proof of Theorem~\ref{thm_counter}.

\begin{lem}\label{lem_cov_arg}
Consider an interval $I\subset \R$ and two continuous curves $\alpha:I\to \R^m$ and $\beta:I\to \R^n$. Suppose that there exists a constant $M>0$ for which 
\begin{equation}\label{eq18}
 |\beta(s)-\beta (s')|\leq M|\alpha(s)-\alpha(s')|,
\end{equation}
 for all $s,s'\in I$.
 Then, for all Borel sets $F\subseteq [0,1]$ and for all $t>0$, \begin{equation}\label{eq19}
 \Hhh^t(\beta(F)) \leq (2M)^t\Hhh^t(\alpha(F)).
\end{equation} 
In particular, if follows that if $\Hhh^1(\beta(F))>0$, then $\Hhh^1(\alpha(F))>0$.
\end{lem}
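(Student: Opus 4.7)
The plan is a direct Hausdorff-measure transport argument: push a $\delta$-cover of $\alpha(F)$ forward through the correspondence $\beta \circ \alpha^{-1}$ and use the Lipschitz-type hypothesis \eqref{eq18} to control the diameter expansion by a factor of $M$. This is essentially the standard proof that an $L$-Lipschitz map contracts Hausdorff $t$-measure by at most $L^t$, adapted to the present setting in which $\beta$ is Lipschitz \emph{relative to} $\alpha$ rather than with respect to the parameter variable.

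In detail, fix $t>0$ and $\delta>0$, and let $\{U_i\}_{i\in\N}$ be any countable cover of $\alpha(F)$ with $\diam(U_i)\leq \delta$. Set $V_i:=\alpha^{-1}(U_i)\cap F$. Then $\{V_i\}_{i\in\N}$ is a cover of $F$, so $\{\beta(V_i)\}_{i\in\N}$ is a cover of $\beta(F)$. For any $s,s'\in V_i$ one has $\alpha(s),\alpha(s')\in U_i$, hence $|\alpha(s)-\alpha(s')|\leq \diam(U_i)$, and \eqref{eq18} then gives $|\beta(s)-\beta(s')|\leq M\,\diam(U_i)$. Consequently $\diam(\beta(V_i))\leq M\,\diam(U_i)\leq M\delta$, and summing $t$-th powers yields
$$\sum_i\diam(\beta(V_i))^t\;\leq\; M^t\sum_i\diam(U_i)^t.$$
Taking the infimum over admissible $\delta$-covers of $\alpha(F)$ gives the pre-measure bound $\Hhh^t_{M\delta}(\beta(F))\leq M^t\,\Hhh^t_\delta(\alpha(F))$, and letting $\delta\to 0$ establishes \eqref{eq19}. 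The constant $2$ in $(2M)^t$ is harmless slack; one in fact obtains the sharper constant $M^t$.

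The ``in particular'' assertion follows immediately by contrapositive: applying \eqref{eq19} with $t=1$, if $\Hhh^1(\alpha(F))=0$ then $\Hhh^1(\beta(F))=0$, so $\Hhh^1(\beta(F))>0$ forces $\Hhh^1(\alpha(F))>0$. I do not anticipate any serious obstacle; the only point requiring a brief verification is that the preimages $\alpha^{-1}(U_i)\cap F$ genuinely cover $F$, which is immediate from the fact that $\{U_i\}$ covers $\alpha(F)$. Continuity of $\alpha$ and $\beta$ is not used in the estimate itself; it only enters to make the claim meaningful by guaranteeing Borel measurability of the relevant preimages.
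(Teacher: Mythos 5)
Your proof is correct and follows essentially the same covering argument as the paper. The only difference is that you cover $\beta(F)$ by the direct images $\beta(\alpha^{-1}(U_i)\cap F)$ rather than, as the paper does, by balls of radius $M\diam(A_i)$ centered at chosen points $\beta(s_i)$, which is why you obtain the sharper constant $M^t$ in place of the paper's $(2M)^t$.
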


We prove Lemma \ref{lem_cov_arg} by applying a straightforward covering argument based on the definition of the Hausdorff measure..

\begin{proof}
Let $t>0$ and $F\subseteq I$ a Borel set. In the case when $\Hhh^t(\alpha(F))=\infty$, \eqref{eq19} holds trivially. Therefore, we assume that $\Hhh^t(\alpha(F))=c$ where $0\leq c<\infty$. Let $\delta>0$. Then, there exists an open covering $\mathcal{A}:=\{A_i\}_{i=1}^N$ of $\alpha(F)$ where $N\in \N\cup\{\infty\}$ for which
 $\diam A_i\leq \delta $, for all $i=1,\ldots,N$ and $\sum_{i=1}^N\diam A_i^t\leq c+\delta $. Without loss of generality, assume that $A_i\cap \alpha(F)\neq \lm$ for all $i=1,\ldots,N$. Let $s_i\in I$ such that $\alpha(s_i)\in A_i\cap \alpha(F)$. Then, by \eqref{eq18}, the family of closed balls $B_i$ with center $\beta(s_i)$ and radius $M\diam A_i$ covers $\beta(F)$ and 
 $\diam B_i =2M\diam A_i \leq 2M\delta$ for all $i=1,\ldots,N$. This yields
 $$\Hhh^t_{2M\delta} (\beta(F)) \leq \sum_{i=1}^N(\diam B_i)^t\leq (2M)^t \sum_{i=1}^N(\diam A_i)^t\leq (2M)^t(c+\delta),$$ and hence 
 $\Hhh^t((\beta(I))\leq (2M)^t\,c$.
\end{proof}

The following lemma is an application of Lemma \ref{lem_cov_arg}.
\begin{lem}\label{lem_apply_covlem}
Let  $b\in (0,\infty]$ and let ${f,g:[0,b]\to [0,\infty)}$ be two strictly increasing functions. Define $h(t):=f(t)g(t)$ for all $t\in[0,b]$. Then, for all Borel sets $F\subseteq [0,b]$, if $\Hhh^1(f(F))>0$, then $\Hhh^1(h(F))>0$.
\end{lem}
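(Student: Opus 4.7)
The plan is to apply Lemma~\ref{lem_cov_arg} in its contrapositive form (the final sentence of that lemma) with $\alpha = h$ and $\beta = f$. I would first derive a Lipschitz-type bound $|f(s)-f(s')| \le M |h(s)-h(s')|$ not globally on $[0,b]$ (which can fail if $g(0)=0$) but on every subinterval $[a,b']$ bounded away from $0$, and then recover the full statement by a countable decomposition of $F$ combined with countable subadditivity of $\Hhh^1$.

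For the key inequality, fix $a>0$ and $b'\in(a,b]$ and take $s,s'\in[a,b']$ with $s>s'$. Since $f$ is nonnegative and increasing, the quantity $f(s)\bigl(g(s)-g(s')\bigr)$ is nonnegative; since $g$ is strictly increasing, $g(s')\ge g(a)>0$. Thus
\[ h(s)-h(s') \;=\; f(s)\bigl(g(s)-g(s')\bigr) + g(s')\bigl(f(s)-f(s')\bigr) \;\ge\; g(a)\bigl(f(s)-f(s')\bigr), \]
so that $|f(s)-f(s')|\le g(a)^{-1}|h(s)-h(s')|$ on $[a,b']$. Lemma~\ref{lem_cov_arg} applied with $\alpha=h$, $\beta=f$, $I=[a,b']$ and $M=g(a)^{-1}$ then yields: for every Borel $F_0\subseteq [a,b']$, $\Hhh^1(f(F_0))>0$ implies $\Hhh^1(h(F_0))>0$.

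To conclude, I would decompose $F\setminus\{0\} = \bigcup_{n\ge 1} F_n$ with $F_n := F\cap\bigl[\tfrac{1}{n},\min\{n,b\}\bigr]$, noting that $f(\{0\})$ is a single point and therefore $\Hhh^1$-null. Countable subadditivity gives
\[ 0 \;<\; \Hhh^1(f(F)) \;=\; \Hhh^1\Bigl(\textstyle\bigcup_n f(F_n)\Bigr) \;\le\; \sum_{n\ge 1} \Hhh^1(f(F_n)), \]
so some index $n_0$ satisfies $\Hhh^1(f(F_{n_0}))>0$. Since $F_{n_0}\subseteq[1/n_0,\min\{n_0,b\}]$, the previous step gives $\Hhh^1(h(F_{n_0}))>0$, hence $\Hhh^1(h(F))\ge \Hhh^1(h(F_{n_0}))>0$.

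The one genuine obstacle is the possibility that $g(0)=0$: it is precisely this that blocks a single global application of Lemma~\ref{lem_cov_arg} on $[0,b]$ and forces the decomposition into pieces avoiding a neighbourhood of the origin. The remaining ingredients are routine: monotonicity controls the signs in the two-term expansion of $h(s)-h(s')$, and countable subadditivity of $\Hhh^1$ absorbs the countable decomposition.
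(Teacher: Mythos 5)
Your proof is correct and follows essentially the same route as the paper: restrict to $F\cap[\tfrac{1}{n},b]$ via countable subadditivity to get $g$ bounded below, derive the pointwise bound $h(s)-h(s')\ge g(\tfrac{1}{n})\bigl(f(s)-f(s')\bigr)$ from the two-term expansion, and invoke Lemma~\ref{lem_cov_arg}. You have in fact assigned the roles $\alpha=h$, $\beta=f$ in the correct orientation for the contrapositive conclusion of that lemma (the paper's text writes the roles the other way around, which is evidently a slip), so nothing further is needed.
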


\begin{proof}
Let $F\subseteq [0,b]$ be a Borel set with $\Hhh^1(f(F))>0$. Then, by sub-additivity of $\Hhh^1$ and the fact that $f$ is increasing, there exists a number $n\in \N$ with $n>\tfrac{1}{b}$, such that for $F_n:=F\cap [\tfrac{1}{n},b]$, we have $\Hhh^1(f(F_n))>0$.
For $s<s'\in [\tfrac{1}{n},b]$, we have
$$h(s')-h(s)=f(s')g(s')-f(s)g(s)\geq (f(s')-f(s))g(s')\geq g(\tfrac{1}{n}) f(s)-f(s')>0.$$
Applying Lemma~\ref{lem_cov_arg} for $\alpha=f:[\tfrac{1}{n},b]\to [0,\infty)$, $\beta=h:[\tfrac{1}{n},b]\to [0,\infty)$, and $M=\frac{1}{g(\frac{1}{n})}$, yields $\Hhh^1(h(F))\geq\Hhh^1(h(F_n))>0$.
\end{proof}

Our strategy for the proof of Theorem~\ref{thm_counter} goes as follows. For $0<d<1$ consider the Borel set $A\subset \R^2$ from Lemma~\ref{lem_counter_1} and its exceptional set $E=\{v\in S^1: \dim (P_{v^\perp} A) < \dim A\}$. We construct the norm $\Norm$ such that the Gauss map for $\Norm$ blows up the exceptional set $E$ to a set of positive $\Hhh^1$-measure. Thus, by Lemma~\ref{lem_counter_2}, Conclusion~1 of Theorem~\ref{thm_norm} fails for $\Norm$. The construction of such a norm $\Norm$ roughly goes as follows.
Identify~$S^1$ with the interval $[0,2\pi)$. This identification will be denoted by $\alpha^{-1}:S^1\to [0,2\pi)$. We consider a suitable subset $K\subset \alpha^{-1}(E)$ and construct a strictly increasing and continuous function $f$ that blows up the set $K$ to a set of positive length. Then, the integral $F$ of $f$ will be strictly convex and $C^1$. Now, we roll the graph of $F$ back up with $\alpha$ (resp.\ its extension~$h$); see Figure~\ref{fig_counterex_1}. Thus, the image $\Gamma$ of the graph of $F$ will be a piece of the boundary of a strictly convex set which defines a norm $\Norm$ on $\R^2$, see Figure~\ref{fig_counterex_4}. We will show that the Gauss map of this norm restricted to $\Gamma$, will still behave like the function $f$ in terms of its measure theoretic properties. (The case where $1\leq d<2$ is analogous.)

\begin{proof}[Proof of Theorem~\ref{thm_counter}] 
Let $0<d<1$ and consider the Borel set $A\subset \R^2$ from Lemma~\ref{lem_counter_1} and its exceptional set $E=\{v\in S^1: \dim (P_{v^\perp} A) < \dim A\}$.
Consider the parameterization $\alpha:[0,2\pi)\to S^1$ given by $\alpha(t):=(\cos(t),\sin(t))$. Since $\alpha$ is locally bi-Lipschitz, it follows that $\dim (\alpha^{-1}(E))=d$. Let $0<s<d$. Then, by definition of the Hausdorff dimension, $\Hhh^s(\alpha^{-1}(E))=\infty$. Therefore, by {\cite[Theorem\,8.13]{Mattila1995}}, there exists a compact set 
\begin{equation}\label{eq20}
K\subset \alpha^{-1}(E)= (\{ t\in [0,1]: \dim P^\eucl_{\alpha(t)^\perp}(A)<\dim A\}).
\end{equation}
 with $0<\Hhh^s(K)<\infty$. We assume without loss of generality that $K\subset [0,1]$. \\[0.3cm]
Now, define $f:[0,1]\to [0,1]$ by 
\begin{equation}\label{eq_f_in_counterex}
f(t):=\frac{1}{2}\Big(\frac{1}{\Hhh^s(K)}\Hhh^s\big([0,t]\cap K\big)+t\Big).
\end{equation}
Notice that $t\mapsto \Hhh^s([0,t])$ is non-decreasing and continuous. (In case $K$ is the triadic Cantor set the function then $t\mapsto \Hhh^s([0,t]\cap K$ is the triadic Cantor staircase function). Thus, $f$ is a strictly increasing homeomorphism. Furthermore, since $K$ is compact, $[0,1]\wo K$ consists of countably many (relatively) open intervals in~$[0,1]$. On each interval in~$[0,1]\wo K$, $f$ is linear with slope $\frac{1}{2}$. Hence, $\Hhh^1(f([0,1]\wo K))=\frac{1}{2}$ and it follows that $\Hhh^1(f(K))=\tfrac{1}{2}>0$. \\[0.3cm]
Define the mapping $F:[0,1]\to [0,1]$ by
 $$F(u):=\frac{1}{4}\int_0^u f(t) \dm t.$$
 
 Then, $F:[0,1]\to [0,1]$ is an injective and strictly convex $C^1$-mapping with $F(1)\leq \frac{1}{4}$. 
  Define $S\subset \R^2$ by $S:=\{r\left(\begin{smallmatrix}\cos(t)\\ \sin(t)\end{smallmatrix}\right):t\in [0,1], \, r\geq 0\}$. Moreover, we define the mapping $h:[0,1]\times[0,1]\to S$ by
  $h(x,y):=(1-y)\left(\begin{smallmatrix}\cos(x)\\ \sin(x)\end{smallmatrix}\right)$, and the curve $\gamma:[0,1]\to S$ by $\gamma(t):=h(t,F(t))$. Thus, the curve~$\gamma$ parameterizes the arc~$h(\mathrm{Graph}(F))$; see Figure~\ref{fig_counterex_1}.\medskip
  
\begin{figure}[h] 
\begin{center}
\def\svgwidth{300pt}
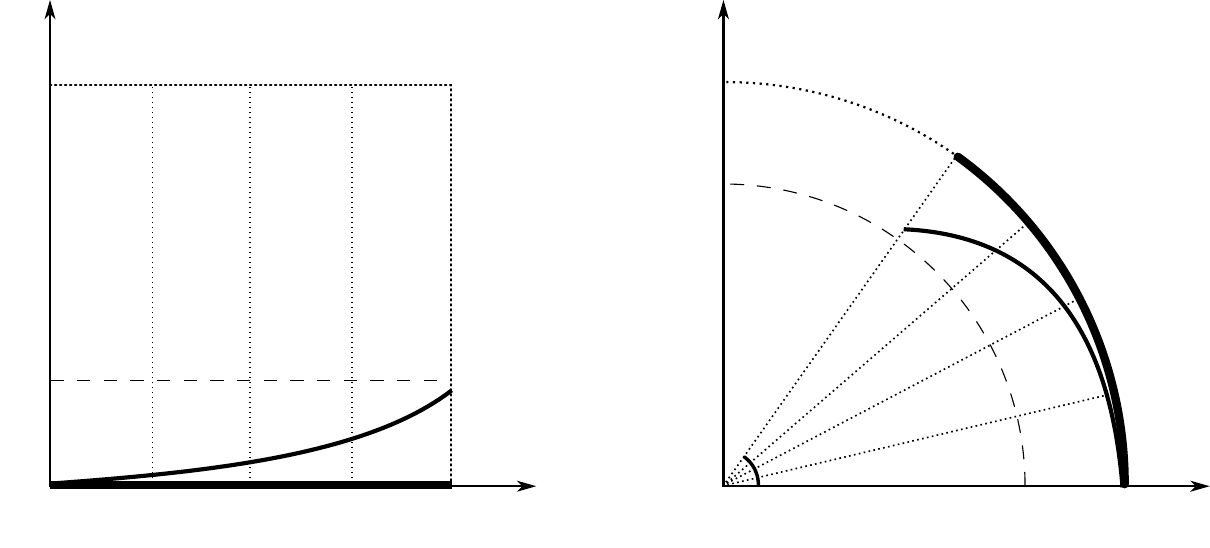
\end{center}
\caption{Construction of the curve $\gamma$ from $F:[0,1]\to [0,\tfrac{1}{4}]$. }\label{fig_counterex_1}
\end{figure}

 Observe that for all $t\in [0,1]$,
\begin{equation*}
  \alpha(t)=\frac{\gamma(t)}{|\gamma(t)|}.
\end{equation*}  
 Moreover, $\gamma$ is a regular $C^1$-curve and $\dot\gamma$ is given by
 \begin{eqnarray}
 \dot{\gamma}(t) & =& \left( \begin{matrix}
   -(1-F(t))\sin (t) & -\cos(t) \\
   (1-F(t))\cos(t)& -\sin(t) \\
  \end{matrix}  \right)  \left( \begin{matrix}
  1\\
   \frac{1}{4}f(t) \\
  \end{matrix} \right)\\  
  \label{eq21}
   & =& (1-F(t))\left( \begin{matrix}
   \cos (t+\frac{\pi}{2}) & -\sin (t+\frac{\pi}{2})  \\
   \sin (t+\frac{\pi}{2})& \cos (t+\frac{\pi}{2})  \\
  \end{matrix} \right)  \left(\begin{matrix}
  1\\
   \frac{1}{4(1-F(t))}f(t) \\
  \end{matrix}\right)_\cdot 
 \end{eqnarray}
 Notice that since $0\leq F(t)\leq \tfrac{1}{4}$, it follows that $4(1-F(t))\geq 3$ and 
\begin{equation}\label{eqn_new}
 \tfrac{1}{4(1-F(t))}\leq \tfrac{1}{3}.
\end{equation} 

Consider the curve $\beta:[0,1]\to S^1$, defined by $\beta(t):=\frac{\dot{\gamma}(t)}{|\dot{\gamma}(t)|}$. As we will establish later, $\beta$ has the following properties:
 \begin{enumerate}[label=(P\arabic*), topsep=3pt, itemsep=5pt, leftmargin=30pt]
 \item  \ $\beta:[0,1]\to S^1$ is an injective curve that travels in $S^1$ in counterclockwise direction from~$\beta(0)=\left(\begin{smallmatrix}0\\ 1 \end{smallmatrix}\right)$ to $\beta(1)$ where $\beta(1)=(\cos(s),\sin(s))$, with $s\in(\tfrac{\pi}{2},\pi)$.
 \item \ $\Hhh^1(\beta(K))>0$.
 \end{enumerate}

Denote the image of $[0,1]$ under $\gamma$ by $\Gamma$. From our bounds for the values of $\beta$ at $t=0$ and $t=1$ from property (P1), it follows that we can extend the union $\Gamma\cup (-\Gamma)$ to the image of a closed $C^1$-curve~$\bar{\Gamma}$, by gluing arcs $R$ and $-R$ to $\Gamma$ and $-\Gamma$, such that the tangential directions at the gluing points agree, as illustrated in Figure~\ref{fig_counterex_4}.

\begin{figure}[h] 
\begin{center}
\def\svgwidth{280pt}
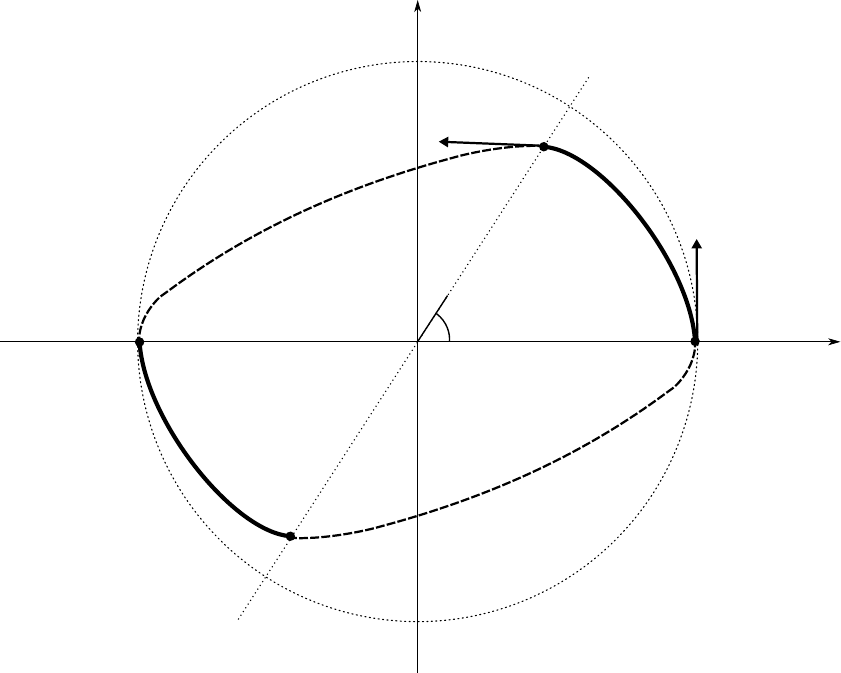
\end{center}
\caption{Normal sphere that contains the arc $\Gamma$.}\label{fig_counterex_4}
\end{figure}

Recall that by property (P1), $\beta$ is injective. Thus, $\bar{\Gamma}$ is a simply closed curve that bounds a strictly convex, antipodally symmetric subset of $\R^2$ with non-empty interior. Hence, $\bar{\Gamma}$ defines a norm $\Norm$ on $\R^2$ by setting $S_\tnorm^{1}:=\bar{\Gamma}$.
 Moreover, since $\beta(t)$ is tangential to $\bar{\Gamma}$ at $\gamma(t)\in \bar{\Gamma}$ for $t\in [0,1]$,  the Gauss map $G:S^1_\tnorm\to S^1$ of the norm $\Norm$ in such points is given by
\begin{equation}\label{eq22}
G(\gamma(t))=R_{\tfrac{\pi}{2}}\beta(t),
\end{equation}
where $R_{\tfrac{\pi}{2}}$ denotes the counterclockwise rotation about the angle $\tfrac{\pi}{2}$. Recall that by property~(P2), $\Hhh^1(\beta (K))>0$. 
Thus, \eqref{eq22} implies that $\Hhh^1 G(\gamma(t))>0$, where $G$ denotes the Gauss map of the arc $\Gamma$ parameterized by $\gamma$. 
This proves Theorem~\ref{thm_counter} given properties (P1) and (P2) for $\beta$.\\[0.3cm]
Thus, we are left to prove that $\beta$ actually does satisfy properties (P1) and (P2).
Let us begin by defining shorter notations for the objects appearing in \eqref{eq21}.
 For $t\in [0,1]$, we write 
$$M(t):=  \left( \begin{matrix}
   \cos (t+\frac{\pi}{2}) & -\sin (t+\frac{\pi}{2})  \\
   \sin (t+\frac{\pi}{2})& \cos (t+\frac{\pi}{2})  \\
  \end{matrix} \right) $$ and 
  $$v(t):= \left(\begin{matrix}
  1\\
   \frac{1}{4(1-F(t))}f(t) \\
  \end{matrix}\right)_\cdot$$ 
  Hence, $M(t)\in O(2)$, $v(t)\in (\{1\}\times [0,\tfrac{1}{3}])\subset \R^2$ (see \eqref{eqn_new}) and $\dot{\gamma}(t)=(1-F(t))M(t)v(t)$, for all $t\in [0,1]$. Set $w(t):=\frac{v(t)}{|v(t)|}$ for $t\in [0,1]$. Then, by \eqref{eq21}, and the fact that $M(t)\in O(2)$ for all $t\in[0,1]$, it follows that $\beta(t)=M(t)w(t)$.\smallskip
  
Recall that the functions $f:[0,1]\to [0,1]$ as well as $F:[0,1]\to [0,\tfrac{1}{4}]$ are strictly increasing. Thus, in particular, $t\mapsto  \tfrac{1}{4(1-F(t))}$ is strictly increasing. Also, recall that $\Hhh^1(f(K))>0$. Hence, the mapping
$\psi:[0,1]\to [0,\tfrac{1}{3}]$, defined by 
$$\psi(t):= \frac{1}{4(1-F(t))}f(t)$$ 
is strictly increasing as well. Moreover, Lemma~\ref{lem_apply_covlem} implies that $\Hhh^1( \psi(K))>0$. \\[0.2cm]
Note that $\R\to (\{1\}\times \R)\subset \R^2,x\mapsto \left(\begin{smallmatrix}1\\x\end{smallmatrix}\right) $ is an isometric embedding (i.e. a $1$-bi-Lipschitz mapping) and $v(t)= \left(\begin{smallmatrix}1\\\psi(t)\end{smallmatrix}\right)$. Therefore, 
$v:[0,1]\to \{1\}\times[0,\tfrac{1}{3}]$ is injective with 
$v(0)=\left(\begin{smallmatrix}1\\0\end{smallmatrix}\right)$ and $v(1)=\left(\begin{smallmatrix}1\\1/(4(1-F(1)))\end{smallmatrix}\right)$, and $\Hhh^1(v(K))>0$.\\[0.2cm]
Recall that $w(t)=\tfrac{v(t)}{|v(t)|}$, for $t\in [0,1]$. Thus, $w:[0,1]\to S^1$ is an injective curve that travels from $w(0)=v(0)=\left(\begin{smallmatrix}1\\0\end{smallmatrix}\right)$ to $w(1)$,  see Figure~\ref{fig_counterex_2}.\\

\begin{figure}[h] 
\begin{center}
\def\svgwidth{350pt}
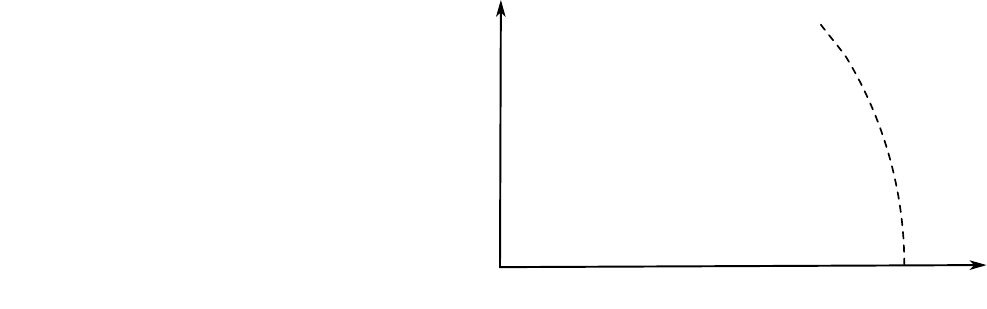
\end{center}
\caption{Construction of $v$ and $w$ from $\psi$.}\label{fig_counterex_2}
\end{figure}  

For $t\in [0,1]$, denote by $\theta(t)\in [0,2\pi)$ the counterclockwise angle from the $x$-axis to~$w(t)$, thus 
\begin{equation}\label{eq20c}
w(t)= \left(\begin{matrix}\cos(\theta(t))\\ \sin(\theta(t))\end{matrix}\right).
\end{equation}

Recall that $v(1)=\left(\begin{smallmatrix}1\\1/4(1-F(1)))\end{smallmatrix}\right)$ and notice that 
$$\frac{1}{1/(4(1-F(1)))}=4(1-F(1))\geq 3 > \frac{\cos(\frac{\pi}{2}-1)}{\sin(\frac{\pi}{2}-1)}_\cdot $$ 
Therefore, it follows that $w(1)=\frac{v(1)}{|v(1)|}=\left(\begin{smallmatrix}\cos(\theta(1))\\\sin(\theta(1))\end{smallmatrix}\right)$ with $\theta(1)\in(0,\tfrac{\pi}{2}-1)$.
Moreover, from the fact that $(\{1\}\times [0,\tfrac{1}{3}])\to S^1$, $x\mapsto \tfrac{x}{|x|}$ is a bi-Lipschitz mapping, it follows that $\Hhh^1(w(K))>0$.\\

Now, consider the curve $\beta:[0,1]\to S^1, \ t\mapsto M(t)w(t)$. The matrix $M(t)$ is the matrix of the counterclockwise rotation  about the angle $t+\tfrac{\pi}{2}$. 

Thus, it follows that 
\begin{equation}\label{eq20d}
\beta(t)=\left(\begin{matrix}\cos(t+\tfrac{\pi}{2}+\theta(t))\\ \sin(t+\tfrac{\pi}{2}+\theta(t))\end{matrix}\right).
\end{equation}

 This makes $\beta:[0,1]\to S^1$ an injective curve that travels in $S^1$ in counterclockwise direction from
 $\beta(0)= \left(\begin{smallmatrix}0\\1\end{smallmatrix}\right)$ 
 to $\beta(1)= \left(\begin{smallmatrix}\cos(s)\\ \sin(s)\end{smallmatrix}\right),$ where $s:=1+\tfrac{\pi}{2}+\theta(1)$ and thus $s\in ( 1+\tfrac{\pi}{2}, \pi)$; see Figure~\ref{fig_counterex_3}. This proves property (P1). 
 
\begin{figure}[h] 
\begin{center}
\def\svgwidth{300pt}
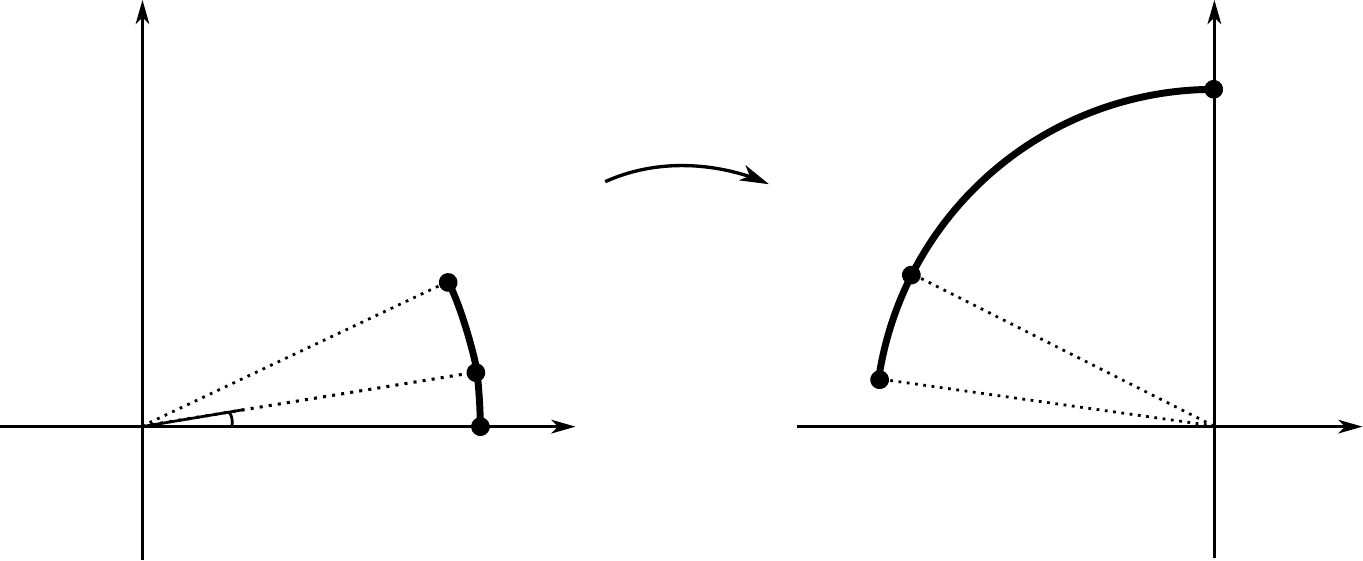
\end{center}
\caption{From $w(t)$ to $\beta(t)$ by left multiplication with $M(t)$, $t\in [0,1]$.}\label{fig_counterex_3}
\end{figure}  
 
Moreover, it follows from \eqref{eq20c} and \eqref{eq20d} that $|\beta(t)-\beta(t')|\geq |w(t)-w(t')|$, for all $t,t'\in [0,1]$. Thus, by Lemma \ref{lem_cov_arg} and the fact that $\Hhh^1(w(K))>0$, it follows that $\Hhh^1(\beta(K))>0$. This proves property~(P2).
 \end{proof}

\section{Final Remarks}\label{sec_final}

\subsection{Codimension greater than 1}

As pointed out in the introduction, for every strictly convex norm $\Norm$ on $\R^n$ and for every $1\leq m<n$, the family $P^\hnorm_V: \R^n\rightarrow \R^n$, $V\in G(n,m)$ of closest-point projections with respect to $\Norm$ is well-defined. 
Nevertheless, Theorem~\ref{thm_norm} only covers the case when $m=n-1$. We strongly believe that a statement similar to Theorem~\ref{thm_euclidean} holds for general codimension, i.e., for all $1\leq m<n$. However our methods do not allow a proof yet. One can check that, in general, for strictly convex norms (even if they have a good differentiable regularity) projections onto plane of codimension greater than one ($m<n-1$) are not linear mappings and therefore Theorem~\ref{thm_lin_proj} is not applicable. For example, a simple calculation (see~\cite[Section\,5.5]{AnninaPhD}) shows that projections onto lines induced by the $L_p$-norm on $\R^n$ for $n\geq 3$ are linear mappings if and only if $p=2$ (recall that the $L_2$ norm on $\R^n$ is the standard Euclidean norm).\\[0.2cm]
On the other hand, as we shall prove now, in case that a norm $\Norm$ on $\R^n$ is induced by an inner product space then all Euclidean projection theorems stated in the introduction hold for the family $\{P_V:V\in G(n,m)\}$ for all $1\leq m<n$. For this, denote the Euclidean inner product (the scalar product) in $\R^n$ by $\langle \ndot, \ndot \rangle$. Let $e_1,\ldots,e_n$ be the standard basis of $\R^n$ which is an orthonormal basis with respect to $\langle \ndot, \ndot \rangle$. Moreover, let  $\ascal$ be an inner product on $\R^n$ and $b_1,\ldots,b_n$ an orthonormal basis of $\R^n$ with respect to $\ascal$. Then, the linear mapping 
$\Psi:  (\R^n, \ascal) \to (\R^n, \langle \ndot, \ndot \rangle)$ defined by $\Psi(b_i)=e_i$ for all $i=1,\ldots,n$, is an isometry in the sense that $\aprec x,y\asucc =\langle \Psi(x), \Psi(y)\rangle$ for all $x,y\in \R^n$.  Let $x\in \R^n$ and $V\in G(n,m)$, then by definition of $P^\hnorm$, we have
$\| x-P^\hnorm_V(x)\|=\dist_\tnorm(V,x) $. Since $\Psi$ is an isometry, this implies that 
$|\Psi(x)-\Psi(P^\hnorm_V(x))|= \dist_\eucl(\Psi(x),\Psi(V))$, and hence, by definition of the Euclidean projection, $P^\eucl_{\Psi(V)}(\Psi(x))=\Psi(P^\hnorm_V(x))$. Hence, it follows that 
\begin{equation*}%\label{eq_Psi_proj_scalarproduct}
P^\hnorm_V(x)=\psi^{-1}\circ P^\eucl_{\Psi(V)}\circ \Psi(x),
\end{equation*}
for all $x\in \R^n$ and $V\in G(n,m)$. 
Therefore, in particular, the projection $P_V^\hnorm:\R^n \to V$ is linear and surjective for all ${V\in G(n,m)}$. Moreover, the mapping $\g$ associated with the family $P^\hnorm_V:\R^n\to \R^n$, $V\in G(n,m)$ is given by $\Psi$. Since, $\Psi$ is a linear bijection, $\g:G(n,m)\to G(n,m)$ is a smooth diffeomorphism of manifolds and thus preserves zero-sets and Hausdorff dimension. Therefore, Theorem~\ref{thm_lin_proj} and Theorem~\ref{thm_besfed_lin} apply.

\subsection{Possibility for a generalization of Theorem~\ref{thm_counter}}

The main reason why we cannot state Theorem \ref{thm_counter} in any greater generality is lack of knowledge about the structure of exceptional sets for orthogonal projection.
Notice that the Gauss map $G:S^{1}_\tnorm\to S^{1}$ of the norm $\Norm$ constructed in the proof of Theorem \ref{thm_counter} might turn out to be a $\delta$-H\"older mapping for some $\delta>0$ depending on the geometry of $K$. This would then imply that there exists a $C^{1,\delta}$-regular norm for which Conclusions~1 and~2 of Theorem~\ref{thm_lin_proj} fail. For example, if $K$ happened to be the triadic cantor set, the mapping $f:[0,1]\to [0,1]$ defined in \eqref{eq_f_in_counterex} and therefore also the Gauss map $G:S^1_\tnorm\to S^1$ would be {$\tfrac{\log(2)}{\log(3)}$-H\"older} mappings.
 The question about the geometry of the exceptional sets is in general open. In particular, we do not know, whether a set like the triadic Cantor set appears as a subset of such exceptional sets. For a more detailed account on the study of the structure of exceptional sets for orthogonal projections we refer to the works \cite{JJLL2008, FaessOrp2014, OrpVen_Arx2017, Chen_Arx2017} and references therein.\\[0.2cm]
Furthermore, we do not know whether Theorem~\ref{thm_counter} generalizes to families of projections $P_V:\R^n\to V$, onto $(n{-}1)$-planes $V\in G(n,n-1)$. The main obstacle is that we do not have a suitable analog of the function $f$ given  in equation \eqref{eq_f_in_counterex} if $n\neq 2$. Notice that it is of great importance for the construction of $f$ that the continuity of $t \mapsto \Hhh^s([0,t]\cap K)$ is independent of the structure of $K$. However, the tentative higher-dimensional analog of this is not true. For example, if $K\subset [0,1]^2$ contains an isolated line segment parallel to an axis then the mapping $(t,r)\mapsto \Hhh^1\left( ([0,t]\times[0,r])\cap K\right)$ is not continuous.\\[0.2cm]
It is an interesting question whether Theorem~\ref{thm_besfed_norm} holds for the $C^1$-regular norms constructed in the proof of Theorem~\ref{thm_counter}. In order to approach this question, we suggest to study the rectifiability properties of the set sets $A$ of Lemma~\ref{lem_counter_1}. As pointed out in the proof of Lemma~\ref{lem_counter_1}, the construction of these sets are due to~\cite{KaufMat1975}. They are based on the number theoretic considerations in~\cite{Eggleston}.

\subsection{Projection theorems via differentiable transversality}

Peres and Schlag~\cite{PS2000} establish a very general projection theorem for families of (abstract) projections from compact metric spaces to Euclidean space. Their result states that if a sufficiently regular family of projections satisfies a certain transversality condition, then this yields bounds for the Sobolev dimension of the push-forward (by the projections) of certain measures. All the classical Marstrand-type projection theorems for orthogonal projection in $\R^n$ can be deduced as corollaries from their result; see~\cite[Section~6]{PS2000} and~\cite[Section~18.3]{Mattila2015}. Moreover, Hovila et.\ al.~\cite{HJJL2012} has proven that if a family of abstract projections satisfies transversality with sufficiently good transversality constants ,then this yields a Besicovitch-Federer-type projection theorem for this family of projections. This makes differentiable transversality a very powerful method in establishing  projection theorems in various settings. In particular, the works~\cite{Hovila2014} (Heisenberg groups) and~\cite{BaloghIseli2016} (Riemannian surfaces of constant curvature) are based on Peres and Schlag's notion of transversality.\\[0.2cm]
In fact, one can check that if a strictly convex norm $\Norm$ on $\R^n$ is $C^{2,\delta}$-regular for some $\delta>0$ then the induced family of closest-point projections satisfies differentiable transversality. Also, the better the regularity of the norm, the better the transversality constant. This is worked out in detail 
in~\cite{AnninaPhD}. Notice that the transversality constants affect the bounds for the size of the exceptional sets for Marstrand-type theorems deduced from transversality. Therefore, whenever $\Norm$ fails to be $C^\infty$-regular the Marstrand-type theorems that can be obtained by establishing differentiable transversality are worse than Theorem~\ref{thm_norm}. On the other hand, the fact that families of projections induced by a sufficiently regular norm are transversal to some extend can be considered a result of interest independent of projection theorems. Note that for example, families of closest-point projections in infinity dimensional Banach spaces fail to be transversal~\cite{Bate2017}.

\newpage
\bibliographystyle{abbrv}
\bibliography{literature_projections}

\end{document}